\newcommand{\Q}{{\mathbb Q}}
\newcommand{\R}{{\mathbb {R}}}
\renewcommand{\leq}{\leqslant}
\renewcommand{\geq}{\geqslant}
\newcommand{\RR}{\mathbb{R}}
\newtheorem{theorem}{Theorem}
\newtheorem{thm}[theorem]{Theorem}
\theoremstyle{plain}
\newtheorem{cor}[theorem]{Corollary}
\newtheorem{exam}[theorem]{Example}
\newtheorem{lem}[theorem]{Lemma}
\newtheorem{prop}[theorem]{Proposition}
\newtheorem{obs}[theorem]{Observation}
\numberwithin{equation}{section}
\numberwithin{theorem}{section}
\theoremstyle{definition}
\appto{\bibsetup}{\sloppy}
\newcommand{\EE}{\mathbb{E}} 
\newcommand{\PP}{\mathbb{P}} 
\newcommand{\QQ}{\mathbb{Q}} 
\newtheorem{ass}{Assumption}
\title{Change of measure in a Heston-Hawkes stochastic volatility model}
\author{David R. Ba\~{n}os\thanks{Department of Mathematics, University of Oslo, P.O. Box 1053 Blindern, N-0316 Oslo, Norway, Email: \mbox{davidru@math.uio.no}}, 
Salvador Ortiz-Latorre\thanks{Department of Mathematics, University of Oslo, P.O. Box 1053 Blindern, N-0316 Oslo, Norway, Email: \mbox{salvadoo@math.uio.no}}, 
and 
Oriol Zamora Font\thanks {Department of Mathematics, University of Oslo, P.O. Box 1053 Blindern, N-0316 Oslo, Norway, Email: \mbox{oriolz@math.uio.no}}}
\date{\today}
\begin{document}
\maketitle

\begin{abstract}
We consider the stochastic volatility model obtained by adding a compound Hawkes process to the volatility of the well-known Heston model. A Hawkes process is a self-exciting counting process with many applications in mathematical finance, insurance, epidemiology, seismology and other fields. We prove a general result on the existence of a family of equivalent (local) martingale measures. We apply this result to a particular example where the sizes of the jumps are exponentially distributed. 

{\it Keywords:} stochastic volatility, change of measure, risk neutral measure, existence of equivalent martingale measure, non-Markovian model, Hawkes process, volatility with jumps.

{\it AMS classification MSC2020:} 60G55, 60J76, 91G15, 60H10.
\end{abstract}

\section{Introduction}\label{Intro}

Valuation of assets and financial derivatives constitutes one of the core subjects of modern financial mathematics. There have been several approaches to asset pricing all of which can be classified in two larger groups: equilibrium pricing and rational pricing. The latter gives rise to the commonly used methodology of pricing financial instruments by ruling out arbitrage opportunities. As it is well-known, the absence of arbitrage is closely related to the existence of a probability measure, so-called risk neutral measure, hereby denoted by $\Q$. Prices of derivatives are expectations under such measure. This connection is known as the fundamental theorem of asset pricing \cite{DS94}. 

In contrast, markets evolve in time and they do it under the so-called real world measure, hereby denoted by $\PP$. While we can attempt to model market movements under $\PP$, we also need the dynamics under $\Q$ for pricing purposes. If we are only interested in pricing, then modelling under $\Q$ and calibrating is possible by matching market prices to theoretical ones. However, for asset liability management, investors need often to assess their positions under $\PP$. A common risk management practice is to compute the risk exposure as a way to set economic and regulatory capital levels. \cite{Stein16} shows that exposures computed under the risk neutral measure are essentially arbitrary. They depend on the choice of numéraire and can be manipulated by choosing a different numéraire. Even when choosing commonly used numéraires, these exposures can differ by a factor of two or more. Furthermore, a crucial feature when assessing risk exposures is their distribution. While models under $\Q$ may have well-known tractable distributional properties, this need not be the case when doing the passage to the $\PP$-world and vice versa. For instance, the Vasicek model for interest rates is invariant under a restricted family of measure changes. This is a sought property of the Vasicek model, but it does not need to hold true in other models.

It is a common practice in the literature to assume that such measure change exists and set up a model under the risk neutral measure. Nonetheless, such assumption is not innocuous and nonsensical results can occur if it is in fact not satisfied, see e.g. the discussion in \cite{onchanges} and the references therein. For instance, \cite{BS96} and \cite{Ryd99} show examples of models where no equivalent local martingale measure exist. See also \cite{Ryd97} for conditions to check existence of equivalent martingale measures under Markovian models without jumps.

From the modelling perspective, we adopt a stochastic volatility model in a non-Markovian setting with jumps. The reason for this is twofold. The non-Markovianity can be justified by the clustering of volatility, see e.g. \cite{Cont07}. This is a well-known stylized fact of volatility that cannot be captured under Markovianity. Literature on non-Markovian stochastic volatility models is vast. Here we mention some works on fractional volatility models, which appear when using the fractional Brownian motion as a driving noise for the volatility. The main characteristics of such noise are the possibility to model short and long-range dependence due to the fractional decay of its auto-correlation function and the ability to generate trajectories which are Hölder continuous of index different from $1/2$, corresponding to the case of Brownian motion.  Fractional models with long range dependence were first studied in \cite{ComRen98}. Then, in \cite{AloLeVi07}, Malliavin calculus is introduced to study the asymptotics of the implied volatility in general stochastic volatility models, including fractional models with long and short term dependence. The popularity of rough models (short term dependence models) started with the findings of \cite{Gatheral}. Their name stemming from the roughness of the underlying noise in the stochastic evolution of volatility. Several works in this direction have been conducted, see e.g. \cite{Gatheral}, \cite{perfecthedging}, \cite{characteristicrough} to mention a few. None of the aforementioned works discusses change of measure. In fact, it is not clear whether the volatility process in the rough Heston model (see e.g. \cite{characteristicrough} and \cite{perfecthedging}) is strictly positive, which is a necessary condition for a proper change of measure to be possible. 

The rough Heston model extends the classical Heston model to a model with fractional noise. It is moreover proven in \cite{roughHawkes1}, \cite{roughHawkes2}, \cite{roughHawkes3} that the rough Heston model can be obtained as a limit of Heston models where the noise is a pure jump process known as Hawkes process. The latter, justifies again our choice of model with a self-exciting Hawkes factor.

The inclusion of jumps in the volatility may not be so clear at first glance. Often, researchers include jumps in the stock process in order to capture sudden changes of asset returns, see for instance the works of \cite{Andersenetal02}, \cite{Chernovetal00}, \cite{Erakeretal03}. Classical stochastic volatility models without jumps, such as the Heston model, have the property that returns conditional on volatility are normally distributed. This property fails to explain many features of asset price behaviour. By adding jumps in the returns one can model sudden changes such as economic crashes as well as having more freedom in the modelling of distributions of asset returns. Nevertheless, jumps in returns are transient, in other words, a jump in returns today has no impact on the future distribution of returns \cite{Erakeretal03}. Hence, Markov models for jumps in the asset dynamics such as Poisson process are often used. On the other hand, volatility is highly persistent. If the dynamics is driven by a Brownian motion then volatility can only increase gradually by a sequence of small normally distributed increments. Self-exciting jumps in the volatility provide a rapidly moving persistent factor. This justifies our use of the Hawkes component in the volatility.

Several works support the presence of jumps in the diffusive volatility. \cite{Bates00}, \cite{Duffieetal03} and \cite{Pan02} provide evidence for the presence of positive jumps in volatility. For example, the authors in \cite[Section 5.4]{Pan02} conduct an empirical study on the higher moments of the volatility process seeking evidence of jumps in volatility, as firstly conjectured by \cite{Bates00}. Their findings indicate the possibility of jumps (with positive mean jump size) in the stochastic-volatility process or at least fatter tailed innovations in the volatility process. This justifies our choice of positive self-exciting jumps in our volatility, which we model by compounding a Hawkes process with independent jump sizes. The positivity of jumps also allows us to keep volatility from hitting zero with probability one but, on the other hand, the self-exciting property of the Hawkes process may cause the process to explode. For this reason we assume a stability property to prevent explosion, see \cite[Section 3.1.1]{article1}.

In this work we look at a Heston-type stochastic volatility model with correlated Brownian motions and add a jump part to the volatility process, namely, a compound Hawkes process. We show that a passage from $\PP$ to $\Q$ and vice versa is possible for a rich enough family of probability measures. It is worth noting that our model has three non-tradable noises and one tradable asset, being thus incomplete. This gives rise to non-unique choices of measure change. The proof of existence of equivalent martingale measures for the classical Heston model is conducted in \cite{onchanges}. 

The paper is organized as follows: in Section \ref{Frame} we present our stochastic volatility model with correlated Brownian noises and a compound Hawkes component in the volatility. We also present in this section some useful properties on the existence and positivity of the volatility process. In Section \ref{sec:change}  we prove the main results in this paper. First, we study the integrability of the exponential of the integrated variance and then we prove the existence of (local) martingale measures in our model. Finally, in Appendix \ref{sec: appendix}, we prove some technical lemmas on the existence of solutions of ODEs that appear in the proof of the main results.
 
\section{Stochastic volatility model}\label{Frame}

Let $T\in \R$ , $T>0$ be a fixed time horizon. On a complete probability space $(\Omega,\mathcal{A},\PP)$, we consider a two-dimensional standard Brownian motion $(B,W)=\left\{\left(B_t,W_t\right), t\in[0,T]\right\}$ and its minimally augmented filtration $\mathcal{F}^{(B,W)}=\{\mathcal{F}^{(B,W)}_t, t\in[0,T]\}$. On $(\Omega,\mathcal{A},\PP)$, we also consider a Hawkes process $N=\{N_t, t\in[0,T]\}$ with stochastic intensity given by
\begin{align*}
    \lambda_t=\lambda_0+\alpha\int_0^t e^{-\beta(t-s)}dN_s,
\end{align*}
or, equivalently, 
\begin{align*}
    d\lambda_t=-\beta(\lambda_t-\lambda_0)dt+\alpha dN_t,
\end{align*}
where $\lambda_0>0$ is the initial intensity, $\beta>0$ is the speed of mean reversion and $0<\alpha<\beta$ is the self-exciting factor. Note that the stability condition
\begin{align*}
    \alpha\int_0^\infty e^{-\beta s}ds=\frac{\alpha}{\beta}<1,
\end{align*}
holds. See \cite[Section 2]{Hawkesfinance} and \cite[Section 3.1.1]{article1} for the definition of $N$. 
We assume that $(B,W), N$ and $\{J_i\}_{i\geq 1}$ are independent of each other. 
Then, we consider a sequence of i.i.d., strictly positive and integrable random variables $\{J_i\}_{i\geq 1}$ and the compound Hawkes process $L=\{L_t, t\in[0,T]\}$ given by
\begin{align*}
    L_t=\sum_{i=1}^{N_t}J_i.
\end{align*}   
We write $\mathcal{F}^L=\left\{\mathcal{F}^L_t, t\in[0,T]\right\}$ for the minimally augmented filtration generated by $L$ and
\begin{align*}
    \mathcal{F}=\{\mathcal{F}_t=\mathcal{F}_t^{(B,W)}\vee\mathcal{F}_t^L, t\in[0,T]\},
\end{align*}
for the joint filtration. We assume that $\mathcal{A}=\mathcal{F}_T$ and we will work with $\mathcal{F}$. Since $(B,W)$ and $L$ are independent processes, $(B,W)$ is also a two-dimensional $(\mathcal{F},\PP)$-Brownian motion. 

Finally, with all these ingredients, we introduce our stochastic volatility model. We assume that the interest rate is deterministic and constant equal to $r$, but a non-constant interest rate can easily be fitted into this framework. The stock price $S=\{S_t, t\in[0,T]\}$ and its variance $v=\{v_t, t\in[0,T]\}$ are given by
\begin{align}\label{2}
    \frac{dS_t}{S_t} & =\mu_tdt+\sqrt{v_t}\left(\sqrt{1-\rho^2} dB_t+\rho dW_t\right), \\
    \label{21} dv_t & =-\kappa\left(v_t-\Bar{v}\right)dt+\sigma\sqrt{v_t}dW_t+\eta dL_t,
\end{align}
where $S_0>0$ is the initial price of the stock, $\mu: [0,T] \rightarrow \R$ is a measurable and bounded function, $\rho\in(-1,1)$ is the correlation factor, $v_0>0$ is the initial value of the variance, $\kappa>0$ is the variance's mean reversion speed, $\Bar{v}>0$ is the long-term variance, $\sigma>0$ is the volatility of the variance and $\eta>0$ is a scaling factor. We assume that the Feller condition $2\kappa\Bar{v}\geq\sigma^2$ is satisfied, see \cite[Proposition 1.2.15]{AlfonsiAurélien2015ADaR}.

Note that our stochastic volatility model is the well-known Heston model but adding a compound Hawkes process in the variance process. The procedure to prove strong existence and pathwise uniqueness of the SDE in \eqref{21} is essentially written in \cite[Chapter V.10, Theorem 57]{Pro1992}. Informally, between jump times, the SDE is just a standard CIR model with an initial condition that depends on the value of the compound Hawkes after the jump has occurred. Since strong existence and pathwise uniquess is proven for the SDE that defines the CIR model, see \cite[Theorem 1.2.1]{AlfonsiAurélien2015ADaR}, one can properly interlace the solutions of the continuous path SDE and the jumps as they do in \cite[Chapter V.10, Theorem 57]{Pro1992} and prove strong existence and pathwise uniqueness for our SDE in \eqref{21}. In particular, this implies that $v$ is a positive process. 

\begin{prop}
Equation \eqref{21} has a pathwise unique strong solution. 
\end{prop}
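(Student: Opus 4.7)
The plan is to follow the interlacing strategy sketched in the text, making the three ingredients explicit: non-explosion of $N$ on $[0,T]$, strong existence and pathwise uniqueness for the CIR SDE between jumps, and a measurable pasting at the jump times.

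First I would fix an $\omega$ outside a suitable null set and list the jump times of the Hawkes process $N$ on $[0,T]$ as $0=\tau_0<\tau_1<\tau_2<\cdots$. Because the stability condition $\alpha/\beta<1$ holds, the intensity $\lambda_t$ is integrable on $[0,T]$ and $N_T<\infty$ almost surely (see \cite[Section 3.1.1]{article1}), so only finitely many jump times fall in $[0,T]$. Thus I only need to concatenate finitely many continuous pieces.

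Next, on each stochastic interval $[\tau_i,\tau_{i+1})$ the driver $L$ is constant, so \eqref{21} reduces to the CIR equation
\begin{equation*}
dv_t=-\kappa(v_t-\bar v)\,dt+\sigma\sqrt{v_t}\,dW_t,\qquad v_{\tau_i}\text{ given}.
\end{equation*}
Since the Feller condition $2\kappa\bar v\geq\sigma^2$ is in force, \cite[Theorem 1.2.1]{AlfonsiAurélien2015ADaR} gives a pathwise unique strong $(\mathcal F,\PP)$-adapted solution on this interval, strictly positive provided $v_{\tau_i}>0$. I then define
\begin{equation*}
v_{\tau_{i+1}}:=v_{\tau_{i+1}-}+\eta\,\eta J_{i+1}=v_{\tau_{i+1}-}+\eta\,J_{i+1},
\end{equation*}
which is again strictly positive because $J_{i+1}>0$ by assumption. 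Starting from $v_0>0$ and iterating over $i=0,1,\dots,N_T$ produces a process $v=\{v_t,t\in[0,T]\}$ satisfying \eqref{21}. Adaptedness to $\mathcal F$ is preserved at each step because $\tau_{i+1}$ is an $\mathcal F$-stopping time, $v_{\tau_i}$ is $\mathcal F_{\tau_i}$-measurable, the CIR strong solution with this initial datum is $\mathcal F$-adapted on $[\tau_i,\tau_{i+1})$, and $J_{i+1}$ is $\mathcal F_{\tau_{i+1}}$-measurable.

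For pathwise uniqueness, suppose $v^{(1)}$ and $v^{(2)}$ are two solutions of \eqref{21}. Their jumps must coincide with the jumps of $L$ (the continuous martingale and drift parts cannot jump), and the jump sizes must equal $\eta J_i$. Hence on each interval $[\tau_i,\tau_{i+1})$ both solutions solve the same CIR equation with the same initial condition $v^{(k)}_{\tau_i}$, which I show by induction is common to both. The CIR uniqueness result then forces $v^{(1)}=v^{(2)}$ on $[\tau_i,\tau_{i+1})$, and induction on $i$ propagates equality across the jumps. The main delicate point, and really the only one, is the measurability bookkeeping when pasting the CIR solutions across the random jump times; this is handled exactly as in \cite[Chapter V.10, Theorem 57]{Pro1992}, so I would invoke that construction rather than redo it by hand.
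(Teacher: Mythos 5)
Your proof follows the same interlacing strategy the paper sketches: almost-sure finiteness of the Hawkes jumps on $[0,T]$, strong existence and pathwise uniqueness for the CIR pieces on each inter-jump interval via \cite[Theorem 1.2.1]{AlfonsiAurélien2015ADaR}, and measurable pasting at the jump times as in \cite[Chapter V.10, Theorem 57]{Pro1992}. This matches the paper's argument; the only blemish is the stray repeated $\eta$ in the displayed jump update, which you immediately correct in the same line.
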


Since the sizes of the jumps of the compound Hawkes process are strictly positive one expects that our variance is greater or equal than the Heston variance. We prove this property following the proof of \cite[Chapter 5.2.C, Proposition 2.18]{Karatzas1991}. The procedure is the same but in our case some extra computations will appear due to the jump contribution of the compound Hawkes process. Recall that the variance of the Heston model is strictly positive because the Feller condition is assumed to hold, see the reference from before \cite[Proposition 1.2.15]{AlfonsiAurélien2015ADaR}. In particular, we will prove that the process $v$ is also strictly positive.
\begin{prop}\label{p1}
Let $\widetilde{v}=\{\widetilde{v}_t, t\in[0,T]\}$ be the pathwise unique strong solution of 
\begin{align}\label{sde10}
    \widetilde{v}_t=v_0-\kappa\int_0^t\left(\widetilde{v}_s-\Bar{v}\right)ds+\sigma\int_0^t\sqrt{\widetilde{v}_s}dW_s.
\end{align}
Then,
\begin{align*}
    \PP\left(\left\{\omega\in\Omega : \widetilde{v}_t(\omega)\leq v_t(\omega) \ \forall t\in[0,T]\right\}\right)=1,
\end{align*}
where $v$ is the pathwise unique strong solution of \eqref{21}.
\end{prop}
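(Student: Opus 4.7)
The plan is to adapt the Yamada--Watanabe-style comparison argument of \cite[Chapter 5.2.C, Proposition 2.18]{Karatzas1991} to the present jump setting. First I would fix a sequence $\{\phi_n\}_{n\geq 1}$ of nonnegative $C^2$ functions approximating $x\mapsto x^+$, each of which vanishes on $(-\infty,0]$, is nondecreasing with $\phi_n(x)\uparrow x^+$, and whose second derivative satisfies $0\leq\phi_n''(x)\leq \tfrac{2}{nx}$ and is supported in a vanishing interval $(a_n,a_{n-1})\subset(0,\infty)$. Setting $\Delta_t:=\widetilde{v}_t-v_t$, subtracting \eqref{21} from \eqref{sde10} gives $\Delta_0=0$ and
\begin{equation*}
d\Delta_t=-\kappa\,\Delta_t\,dt+\sigma\bigl(\sqrt{\widetilde{v}_t}-\sqrt{v_t}\bigr)\,dW_t-\eta\,dL_t,
\end{equation*}
with continuous quadratic variation $d[\Delta]^c_t=\sigma^2\bigl(\sqrt{\widetilde{v}_t}-\sqrt{v_t}\bigr)^2 dt$.

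Next I would apply the Itô formula for semimartingales with jumps to $\phi_n(\Delta_t)$. The crucial observation is that, after combining the $\phi_n'(\Delta_{s-})(-\eta\,dL_s)$ part of the Itô integral with the jump correction $\sum[\phi_n(\Delta_s)-\phi_n(\Delta_{s-})-\phi_n'(\Delta_{s-})\Delta\Delta_s]$, the net jump contribution collapses to the telescoping sum $\sum_{0<s\leq t}\bigl[\phi_n(\Delta_s)-\phi_n(\Delta_{s-})\bigr]$. Since the jumps of $L$ are strictly positive and $\eta>0$, the jumps of $\Delta$ are strictly negative, and the monotonicity of $\phi_n$ forces this sum to be $\leq 0$. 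After a standard localization to control the stochastic integral $\int_0^\cdot \phi_n'(\Delta_{s-})\sigma\bigl(\sqrt{\widetilde{v}_s}-\sqrt{v_s}\bigr)\,dW_s$ (whose integrand is bounded), taking expectation therefore yields
\begin{equation*}
\EE\bigl[\phi_n(\Delta_t)\bigr]\leq -\kappa\int_0^t\EE\bigl[\phi_n'(\Delta_{s-})\Delta_s\bigr]\,ds+\frac{\sigma^2}{2}\int_0^t\EE\bigl[\phi_n''(\Delta_s)(\sqrt{\widetilde{v}_s}-\sqrt{v_s})^2\bigr]\,ds.
\end{equation*}

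The first drift term is non-positive because $\phi_n'\geq 0$ and $\phi_n'(x)=0$ for $x\leq 0$, so the integrand equals $\phi_n'(\Delta_s)\Delta_s^+\geq 0$. For the Yamada--Watanabe term I would use the elementary inequality $\bigl(\sqrt{a}-\sqrt{b}\bigr)^2\leq |a-b|$ for $a,b\geq 0$ together with the bound $\phi_n''(x)|x|\leq \tfrac{2}{n}$ to dominate the second integrand by $\tfrac{2}{n}$, giving $\EE[\phi_n(\Delta_t)]\leq \sigma^2 t/n$. Letting $n\to\infty$ and invoking monotone convergence produces $\EE[\Delta_t^+]=0$, hence $\widetilde{v}_t\leq v_t$ almost surely for each fixed $t$.

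Finally, to upgrade to the pathwise statement I would apply the fixed-$t$ result to a countable dense set $D\subset[0,T]$ (with $T\in D$), take the intersection of the corresponding probability-one events, and then use the fact that $\widetilde{v}$ has continuous paths while $v$ is \cadlag{} to extend the inequality to every $t\in[0,T]$ by right-continuity. The main obstacle is the correct bookkeeping of the jump terms in Itô's formula: one must verify that the $dL$ integral and the jump correction sum really do cancel into a telescoping, sign-definite quantity. Once this observation is in hand, the argument reduces to the classical Yamada--Watanabe pathwise uniqueness estimate for the CIR square-root diffusion.
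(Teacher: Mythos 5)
Your plan follows essentially the same Yamada--Watanabe comparison argument that the paper uses: approximate $x^+$ by nondecreasing $C^2$ functions $\phi_n$, apply It\^o's formula with jumps to $\phi_n(\Delta_t)$ with $\Delta=\widetilde v-v$, observe that the $dL$-integral cancels against the jump correction term leaving $\sum_{0<s\leq t}[\phi_n(\Delta_s)-\phi_n(\Delta_{s-})]\leq 0$ because the jumps of $\Delta$ are nonpositive and $\phi_n$ is nondecreasing, and then use the Yamada--Watanabe bound on $\phi_n''$ to control the quadratic variation term by $\sigma^2 t/n$. This is the core of the paper's proof. You diverge in two small ways. First, you notice directly that the drift integrand $\phi_n'(\Delta_s)\Delta_s$ is nonnegative (since $\phi_n'$ vanishes on $(-\infty,0]$), so the drift term can simply be discarded and you get $\EE[\phi_n(\Delta_t)]\leq\sigma^2 t/n$ outright; the paper instead keeps the bound $\kappa\int_0^t I_s^+\,ds$ and closes with Gronwall. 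Your shortcut is cleaner and equally valid.

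Second, and more importantly, your parenthetical claim that the integrand of $\int_0^\cdot\phi_n'(\Delta_{s-})\sigma(\sqrt{\widetilde v_s}-\sqrt{v_s})\,dW_s$ is \emph{bounded} is false: $\phi_n'$ is bounded by $1$, but $\sqrt{\widetilde v_s}-\sqrt{v_s}$ is not, since $\phi_n'$ has unbounded support $[a_n,\infty)$ and $\widetilde v$ is unbounded. The paper instead verifies square-integrability directly by noting that $\phi_n'(\Delta_s)^2(\sqrt{\widetilde v_s}-\sqrt{v_s})^2\mathds{1}_{\{\Delta_s>0\}}\leq\widetilde v_s$ and using the explicit formula $\EE[\widetilde v_t]=v_0 e^{-\kappa t}+\bar v(1-e^{-\kappa t})$ for the CIR first moment, which makes the stochastic integral a genuine martingale. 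Your fallback of localization still works (stop at $\tau_k$, take expectations, then send $k\to\infty$ by Fatou using $\phi_n\geq 0$), but you should drop or correct the ``integrand is bounded'' justification and make the localization/limit passage explicit, since as written it is the one step that does not hold up.
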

\begin{proof}
See \cite[Theorem 1.2.1]{AlfonsiAurélien2015ADaR} for a reference about the solution of the SDE in \eqref{sde10}.
There exists a strictly decreasing sequence $\{a_n\}_{n=0}^\infty\subset(0,1]$ with $a_0=1$, $\lim_{n\rightarrow\infty}a_n=0$ and 
\begin{align*}
    \int_{a_n}^{a_{n-1}}\frac{du}{u}=n,
\end{align*}
for every $n\geq1$. Precisely, $a_n=\exp\left(-\frac{n(n+1)}{2}\right)$. 

For each $n\geq1$, there exists a continuous function $\rho_n$ on $\R$ with support in $(a_n,a_{n-1})$ so that
\begin{align}\label{ineq1}
    0\leq \rho_n(x)\leq\frac{2}{nx}
\end{align}
holds for every $x>0$ and $\int_{a_n}^{a_{n-1}}\rho_n(x)dx=1$.

Then, the function
\begin{align}\label{def}
    \psi_n(x)=\int_0^{|x|}\int_0^y\rho_n(u)dudy, 
\end{align} 
is even and twice continuously differentiable, with $|\psi_n'(x)|\leq1$ and $\lim_{n\rightarrow\infty}\psi_n(x)=|x|$ for $x\in\R$. Furthermore, $\{\psi_n\}_{n=1}^\infty$ is nondecreasing.

Next, define the nondecreasing function $\varphi_n(x)=\psi_n(x)\mathds{1}_{(0,\infty)}(x)$. Note that
    \begin{align*}
        \lim_{n\rightarrow\infty}\varphi_n(x)=\max\{x,0\}=:x^+.
    \end{align*}
Define $I_t:=\widetilde{v}_t-v_t$ for $t\in[0,T]$. Applying Itô formula we get
\begin{align*}
    \varphi_n(I_t) = \ &\varphi_n(0)+\int_0^t\varphi_n'(I_{s-})dI_s +\frac{1}{2}\int_0^t\varphi_n''(I_{s-})d[I]_s^{\text{c}} \\ 
    & +\sum_{0<s\leq t}\left[\varphi_n(I_s)-\varphi_n(I_{s-})-\varphi_n'(I_{s-})\Delta I_s\right].
\end{align*}
Note that 
\begin{align*}
    dI_s&=-\kappa(\widetilde{v}_s-v_s)ds+\sigma\left(\sqrt{\widetilde{v}_s}-\sqrt{v_s}\right)dW_s-\eta dL_s, \\
    d[I]_s&=\sigma^2\left(\sqrt{\widetilde{v}_s}-\sqrt{v_s}\right)^2ds+\eta^2d[L]_t.
\end{align*}
The latter implies that $d[I]_s^{\text{c}}=\sigma^2\left(\sqrt{\widetilde{v}_s}-\sqrt{v_s}\right)^2ds$ and $\Delta I_s=-\Delta v_s=-\eta \Delta L_s.$
Therefore, 
\begin{align*}
    \varphi_n(I_t)= \ &-\kappa\int_0^t\varphi_n'(I_s)I_sds+\sigma\int_0^t\varphi_n'(I_{s-})\left(\sqrt{\widetilde{v}_s}-\sqrt{v_s}\right)dW_s  \\
    &-\eta\int_0^t\varphi_n'(I_{s-})dL_s+\frac{\sigma^2}{2}\int_0^t\varphi_n''(I_s)\left(\sqrt{\widetilde{v}_s}-\sqrt{v_s}\right)^2ds \\
    &+\sum_{0<s\leq t}\left[\varphi_n(I_s)-\varphi_n(I_{s-})\right]+\eta\sum_{0<s\leq t}\varphi_n'(I_{s-})\Delta L_s.
\end{align*}
Note that $\eta\int_0^t\varphi_n'(I_{s-})dL_s=\eta\sum_{0<s\leq t}\varphi_n'(I_{s-})\Delta L_s.$
 Since $I_s-I_{s-}=-\eta\Delta L_s\leq0$ and $\varphi_n$ is a non-decreasing function, we have $\sum_{0<s\leq t}\left[\varphi_n(I_s)-\varphi_n(I_{s-})\right] \leq0.$
 
By \eqref{ineq1}, \eqref{def} and using that $|\sqrt{x}-\sqrt{y}|\leq\sqrt{|x-y|}$ for $x,y\geq0$ we obtain
\begin{align*}
    \varphi_n''(I_s)\left(\sqrt{\widetilde{v}_s}-\sqrt{v_s}\right)^2\leq \frac{2}{n}\frac{\left(\sqrt{\widetilde{v}_s}-\sqrt{v_s}\right)^2}{\widetilde{v}_s-v_s}\leq\frac{2}{n}.
\end{align*}
Due to the fact that $|\varphi_n'(x)|\leq 1$ we can conclude that
\begin{align}\label{comp10}
    \varphi_n(I_t)\leq \ &\kappa\int_0^tI_s^+ds+\sigma\int_0^t\varphi_n'(I_{s-})\left(\sqrt{\widetilde{v}_s}-\sqrt{v_s}\right)dW_s +\frac{\sigma^2 t}{n}.
\end{align}
In order to use the zero mean property of the Itô integral
\begin{align}\label{compC}
    \EE\left[\int_0^t\varphi_n'(I_{s-})\left(\sqrt{\widetilde{v}_s}-\sqrt{v_s}\right)dW_s\right]=0,
\end{align}
we need to check that
    \begin{align}\label{compB}
    \EE\left[\int_0^t\varphi_n'(I_s)^2\left(\sqrt{\widetilde{v}_s}-\sqrt{v_s}\right)^2ds\right]<\infty.
\end{align}
Now, 
\begin{align}\label{compA}
    \EE\left[\int_0^t\varphi_n'(I_s)^2\left(\sqrt{\widetilde{v}_s}-\sqrt{v_s}\right)^2ds\right]&= \EE\left[\int_0^t\varphi_n'(I_s)^2\left(\sqrt{\widetilde{v}_s}-\sqrt{v_s}\right)^2\mathds{1}_{\{I_s>0\}}ds\right] \notag\\
    & \leq \EE\left[\int_0^t\left(\sqrt{\widetilde{v}_s}-\sqrt{v_s}\right)^2\mathds{1}_{\{I_s>0\}}ds\right] \notag\\
    & \leq \EE\left[\int_0^t\widetilde{v}_sds\right] \notag\\
    & = \int_0^t\EE\left[\widetilde{v}_s\right]ds.
\end{align}
In \cite[Section 2, Equation (3)]{inverseheston} we see that 
\begin{align*}
    \widetilde{v}_t \sim \frac{e^{-\kappa t}}{k(t)}\chi_\delta^{'2}\left(k(t)\right) \ \ \text{with} \ \ k(t)=\frac{4\kappa e^{-\kappa t}}{\sigma^2(1-e^{-\kappa t})}v_0 \ \ \text{and} \ \ \delta=\frac{4\kappa\Bar{v}}{\sigma^2},
\end{align*}
where $\chi_\delta^{'2}(k(t))$ denotes a noncentral chi-square random variable with $\delta$ degrees of freedom and noncentrality parameter $k(t)$. Then, 
\begin{align}\label{exp}
\EE\left[\widetilde{v}_t\right]=\frac{\sigma^2(1-e^{-\kappa t})}{4\kappa}\left(\delta+k(t)\right)=v_0e^{-\kappa t}+\Bar{v}(1-e^{-\kappa t}).
\end{align}
Since $t\mapsto\EE\left[\widetilde{v}_t\right]$ is continuous function on $[0,T]$, the integral in \eqref{compA} is finite, which  implies that the integral in \eqref{compB} is finite and \eqref{compC} holds. Taking expectations in \eqref{comp10} we have
\begin{align*}
    \EE\left[\varphi_n(I_t)\right]\leq\kappa\int_0^t\EE\left[I_s^+\right]+\frac{\sigma^2 t}{n}.
\end{align*}
Then, sending $n$ to infinity yields $\EE\left[I_t^+\right]\leq\kappa\int_0^t\EE\left[I_s^+\right]ds.$ One can check that
\begin{align}\label{gron}
    \int_0^t\left|\EE\left[I_s^+\right]\right|ds<\infty.
\end{align}
Actually
$$\int_0^t\left|\EE\left[I_s^+\right]\right|ds = \int_0^t\EE\left[I_s^+\right]ds = \int_0^t\EE\left[I_s\mathds{1}_{\{I_s>0\}}\right]ds \leq \int_0^t\EE\left[\widetilde{v}_s\right]ds<\infty,$$
where the last integral is finite as we have seen before.

We can apply a version of Gronwall's inequality where only condition \eqref{gron} is required and we get $\EE[I_t^+]=0$ for all $t\in[0,T]$. This means that
\begin{align*}
    \PP\left(\{\omega\in\Omega: \widetilde{v}_t(\omega)\leq v_t(\omega)\}\right)=1 \hspace{1cm} \forall t\in[0,T].
\end{align*}
Since the sample paths of $\widetilde{v}$ are continuous and the sample paths of $v$ are càdlàg we get that
\begin{align*}
    \PP\left(\left\{\omega\in\Omega : \widetilde{v}_t(\omega)\leq v_t(\omega) \ \forall t\in[0,T]\right\}\right)=1.
\end{align*}
\end{proof}
\begin{cor}\label{pos}
The variance $v=\{v_t, t\in[0,T]\}$ is a strictly positive process. 
\end{cor}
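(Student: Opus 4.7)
The plan is to combine Proposition \ref{p1} with the classical strict positivity of the CIR variance process under the Feller condition. Proposition \ref{p1} already gives us the pathwise inequality
\begin{align*}
    \PP\left(\left\{\omega\in\Omega : \widetilde{v}_t(\omega)\leq v_t(\omega) \ \forall t\in[0,T]\right\}\right)=1,
\end{align*}
so it suffices to show that the comparison process $\widetilde{v}$, which is the standard Heston (CIR) variance solving \eqref{sde10}, is itself strictly positive almost surely.

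First, I would invoke the Feller condition $2\kappa\bar{v}\geq\sigma^2$, which is assumed in the model specification just after equation \eqref{21}. By the reference \cite[Proposition 1.2.15]{AlfonsiAurélien2015ADaR} already cited in the excerpt, this condition guarantees that the CIR process $\widetilde{v}$ stays strictly positive, i.e.\ there is a full-measure event on which $\widetilde{v}_t(\omega)>0$ for all $t\in[0,T]$.

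Intersecting this full-measure event with the one from Proposition \ref{p1}, I would conclude that outside a $\PP$-null set one has $v_t(\omega)\geq \widetilde{v}_t(\omega)>0$ for every $t\in[0,T]$ simultaneously, which is exactly the statement of the corollary. There is no real obstacle here since both ingredients are already established; the only thing to be careful about is to take the intersection of the two almost-sure events so that the resulting strict positivity holds uniformly in $t$ on a single $\PP$-null-set complement, and to note (as is implicit in Proposition \ref{p1}) that the initial value $v_0>0$ takes care of the point $t=0$.
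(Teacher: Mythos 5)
Your proposal is correct and follows exactly the same route as the paper: invoke the Feller condition to get strict positivity of the comparison CIR process $\widetilde v$ via \cite[Proposition 1.2.15]{AlfonsiAurélien2015ADaR}, then apply the pathwise domination from Proposition \ref{p1}. The only (minor, and harmless) addition is your explicit remark about intersecting the two full-measure events, which the paper leaves implicit.
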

\begin{proof}
Recall that we have assumed that the Feller condition $2\kappa \Bar{v}\geq\sigma^2$ is satisfied. Then, the process $\widetilde{v}$ defined by \eqref{sde10} is strictly positive, see \cite[Proposition 1.2.15]{AlfonsiAurélien2015ADaR}. Finally, by Proposition \ref{p1} we have that
\begin{align*}
    \PP\left(\left\{\omega\in\Omega : \widetilde{v}_t(\omega)\leq v_t(\omega) \ \forall t\in[0,T]\right\}\right)=1.
\end{align*}
We conclude that the process $v$ is also strictly positive.
\end{proof}
\section{Risk neutral probability measures}\label{sec:change}
To prove the existence of a family of risk neutral probability measures we follow the classical approach where we employ Girsanov's theorem in connection with the Novikov's condition. Thus, we need to study the integrability of the exponential of the integrated variance, that is, what values $c>0$, if any, satisfy
\begin{align*}
    \EE\left[\exp\left(c\int_0^Tv_udu\right)\right]<\infty.
\end{align*}
By Corollary \ref{pos}, $v$ is strictly positive and the previous expectation is finite for $c\leq0$ but for our applications is essential that $c$ can be strictly positive.
Looking at the proof of \cite[Lemma 3.1]{onchanges} one can see that for $c\leq \frac{\kappa^2}{2\sigma^2}$ the following holds
\begin{align}\label{5}
    \EE\left[\exp\left(c\int_0^T\widetilde{v}_udu\right)\right]<\infty,
\end{align}
where $\widetilde{v}$ is the standard Heston volatility given by
\begin{align*}
    \widetilde{v}_t=v_0-\kappa\int_0^t\left(\widetilde{v}_s-\Bar{v}\right)ds+\sigma\int_0^t\sqrt{\widetilde{v}_s}dW_s.
\end{align*}
The procedure to prove that \eqref{5} holds is to show that
\begin{align}\label{6}
\EE\left[\exp\left(c\int_0^T\widetilde{v}_udu\right)\right]\leq\exp\left(-(\kappa \Bar{v})\Phi(0)-v_0\psi(0)\right)<\infty,
\end{align}
where $\Phi$ and $\psi$ satisfy the following Riccati ODE 
\begin{align}\label{psi}
    \psi'(t) &=\frac{\sigma^2}{2}\psi^2(t)+\kappa\psi(t)+c \\
    -\Phi'(t) & =\psi(t) \notag\\
    \psi(T) &=\Phi(T)=0. \notag
\end{align}
Due to the jump contribution of the compound Hawkes process, this procedure is more delicate in our model. However we can obtain a similar bound as in \eqref{6} with an additional function that will be the solution of an ODE where the moment generating function of $J_1$ and the Hawkes parameters $\alpha$ and $\beta$ are involved. 

We start making an assumption on the moment generating function of $J_1$. We write $M_{J}$ for the moment generating function of $J_1$, that is, $M_{J}(t)=\EE\left[\exp\left(tJ_1\right)\right]$. Since $J_1$ is strictly positive, $M_J$ is well-defined at least on the interval $(-\infty,0]$. 
\begin{ass}\label{as} 
There exists $\epsilon_J>0$ such that $M_J$ is well defined on $(-\infty,\epsilon_J)$ and it is the maximal domain in the sense that
\begin{align*}
    \lim_{t\rightarrow\epsilon_J^-}M_J(t)=\infty.
\end{align*}
Since $\epsilon_J>0$, all positive moments of $J_1$ are finite. 
\end{ass}

Note that this is the case of the gamma distribution, chi-squared distribution, uniform distribution and others. 

We start studying the functions that will appear in a bound like \eqref{6} for our variance. To find the ODEs that define those functions, one can consider the process $M=\{M(t), t\in[0,T]\}$ defined by
\begin{align*}
    M(t)=\exp\left(F(t)+G(t)v_t+H(t)\lambda_t+c\int_0^tv_udu\right),
\end{align*}
for some unknown functions $F,G,H\colon[0,T]\to\R$ satisfying $F(T)=G(T)=H(T)=0$. Note that 
\begin{align*}
    M(T)=\exp\left(c\int_0^Tv_udu\right).
\end{align*}
Hence, $\EE[M(T)]$ is exactly the expectation that we want to study. Now, if we can find $F,G$ and $H$ such that $M$ is a local martingale, since it is non-negative, it will be a supermartingale and then
\begin{align*}
    \EE\left[\exp\left(c\int_0^Tv_udu\right)\right]=\EE[M(T)]\leq M(0)=\exp\left(F(0)+G(0)v_0+H(0)\lambda_0\right),
\end{align*}
where the last expression will be finite as long as $F,G$ and $H$ exist and are well defined on $[0,T]$.

To make $M$ a local martingale one can apply Itô's formula to the process $M$ and equate all the drift terms to $0$. By this procedure one will obtain the ODEs for $F,G$ and $H$. This procedure is formally done in Proposition \ref{novikov}. In the next lemma we first study the existence of solutions of the ODEs that will appear later. Note that the ODE for G is slightly different from the ODE of $\psi$ in \eqref{psi}, this depends on whether one considers the expectation in \eqref{5} with the parameter $c$ or $-c$ and on how one defines the process $M$. The proof of the next lemma is deferred to the appendix because is rather technical and based on ODE theory.  
\begin{lem}\label{lode2} For $c\leq\frac{\kappa^2}{2\sigma^2}$, define $D(c):=\sqrt{\kappa^2-2\sigma^2c}$, $\Lambda(c):=\frac{2\eta c\left(e^{D(c)T}-1\right)}{D(c)-\kappa+\left(D(c)+\kappa\right)e^{D(c)T}}$
and
\begin{align*}
    c_l:=\sup\left\{c\leq\frac{\kappa^2}{2\sigma^2}: \Lambda(c)< \epsilon_J \hspace{0.3cm}\text{and}\hspace{0.3cm} M_J\left(\Lambda(c)\right)\leq\frac{\beta}{\alpha}\exp\left(\frac{\alpha}{\beta}-1\right) \right\}.
\end{align*}
Then, $0<c_l\leq\frac{\kappa^2}{2\sigma^2}$ and for $c< c_l$, 
\begin{enumerate}[(i)]
    \item The ODE
\begin{align}\label{ode1l}
    G'(t) & =-\frac{1}{2}\sigma^2G^2(t)+\kappa G(t)-c  \\
    G(T) & = 0 \notag
\end{align}
has a unique solution in the interval $[0,T]$. The solution is strictly decreasing and it is given by 
\begin{align*}
    G(t)=\frac{2c\left(e^{D(c)(T-t)}-1\right)}{D(c)-\kappa+\left(D(c)+\kappa\right)e^{D(c)(T-t)}}.
\end{align*}
    \item The function $t\mapsto M_J(\eta G(t))$ is well defined for $t\in[0,T]$.
    \item Define $U:=\sup_{t\in [0,T]}M_J(\eta G(t))$. Then, $U=M_J(\eta G(0))$ and 
\begin{align*}
    1<U\leq \frac{\beta}{\alpha}\exp\left(\frac{\alpha}{\beta}-1\right).
\end{align*} 
\item The ODE
\begin{align}\label{ode2l}
    H'(t) & =\beta H(t)-M_J\left(\eta G(t)\right)\exp\left(\alpha H(t)\right) +1 \\
    H(T) & = 0 \notag
\end{align}
has a unique solution in $[0,T]$.
\end{enumerate} 
\end{lem}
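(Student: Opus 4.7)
The plan is to handle the four items in sequence, with the bulk of the work concentrated in the non-autonomous ODE of part (iv).

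For part (i), I would reverse time by setting $\tau = T-t$ and $\tilde G(\tau)=G(T-\tau)$, converting the problem into $\tilde G'(\tau) = \tfrac{\sigma^2}{2}\tilde G^2(\tau) - \kappa \tilde G(\tau) + c$ with $\tilde G(0)=0$. Since $c\leq \kappa^2/(2\sigma^2)$, the polynomial $\tfrac{\sigma^2}{2}x^2 -\kappa x + c$ has real roots $\tfrac{\kappa\pm D(c)}{\sigma^2}$, and separation of variables together with partial fractions yields the closed form stated in the lemma. Uniqueness follows from the local Lipschitz character of the RHS, and $\tilde G$ is trapped in the interval $[0,\tfrac{\kappa-D(c)}{\sigma^2})$: it starts there with strictly positive derivative $\tilde G'(0)=c>0$, and the upper endpoint is a stable equilibrium from below, so $\tilde G$ is strictly increasing in $\tau$, equivalently $G$ is strictly decreasing in $t$.

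For parts (ii) and (iii), evaluating the explicit formula at $t=0$ gives $\eta G(0)=\Lambda(c)$. A comparison argument applied to the backward Riccati with two values $c_1<c_2$ shows that $c\mapsto \Lambda(c)$ is continuous and strictly increasing on $[0,\kappa^2/(2\sigma^2)]$. Consequently, for any $c<c_l$ there exists $c'\in(c,c_l)$ satisfying both defining inequalities, and strict monotonicity of $\Lambda$ together with strict monotonicity of $M_J$ on $[0,\epsilon_J)$ (which holds because $M_J'(s)=\E[J_1 e^{sJ_1}]>0$ since $J_1>0$ a.s.) transfers the conditions to $c$ itself, giving $\eta G(0)=\Lambda(c)<\epsilon_J$ and $M_J(\Lambda(c))\leq \tfrac{\beta}{\alpha}e^{\alpha/\beta-1}$. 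Using that $G$ decreases in $t$, the map $t\mapsto \eta G(t)$ is bounded above by $\Lambda(c)<\epsilon_J$, which proves (ii) and identifies $U=M_J(\eta G(0))$; the strict lower bound $U>1$ comes from $G(0)>0$ and $M_J(0)=1$.

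For part (iv), I would again set $\tilde H(\tau)=H(T-\tau)$, so that $\tilde H'(\tau)=-\beta \tilde H(\tau)+\tilde M(\tau)e^{\alpha \tilde H(\tau)}-1$ with $\tilde H(0)=0$, where $\tilde M(\tau):=M_J(\eta G(T-\tau))$ is continuous on $[0,T]$. The RHS is $C^1$ in $\tilde H$, so Picard--Lindelöf yields a unique local solution, and the task reduces to proving a priori bounds on any maximal interval. A lower bound $\tilde H(\tau)\geq -1/\beta$ follows immediately from $\tilde M e^{\alpha \tilde H}\geq 0$ and integration of $\tilde H'\geq -\beta \tilde H-1$. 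For the upper bound I would compare with the autonomous ODE $z'=f(z)$, $z(0)=0$, for $f(x):=Ue^{\alpha x}-\beta x -1$. The sharp bound $U\leq \tfrac{\beta}{\alpha}e^{\alpha/\beta-1}$ from (iii) is precisely equivalent, via explicit computation, to $\min_x f(x)=f(x^*)\leq 0$ at $x^*=\tfrac{1}{\alpha}\ln(\beta/(\alpha U))>0$; combined with $f(0)=U-1>0$, this guarantees a smallest positive zero $H_1\in(0,x^*]$ with $f>0$ on $[0,H_1)$. Hence $z$ increases monotonically towards $H_1$ without reaching it, and scalar comparison yields $\tilde H(\tau)\leq z(\tau)<H_1$. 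These bounds forbid blow-up, so the local solution extends uniquely to $[0,T]$.

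The main obstacle is the a priori upper bound in (iv): the exponential drift $Ue^{\alpha \tilde H}$ would otherwise produce finite-time blow-up, and confining the solution hinges on recognising that the algebraic condition on $U$ imposed in (iii) is exactly the one that makes the minimum of $f$ non-positive and thereby traps $\tilde H$ below the equilibrium $H_1$.
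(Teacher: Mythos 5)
Your proposal is correct and follows essentially the same strategy as the paper: time-reverse and solve the Riccati explicitly for $G$, identify $\eta G(0)=\Lambda(c)$ and reduce (ii)--(iii) to the defining inequalities for $c_l$, and prove global existence of $H$ in (iv) by trapping the time-reversed solution between the autonomous subsolution $z_m'=-\beta z_m-1$ and supersolution $z'=Ue^{\alpha z}-\beta z-1$, with the hypothesis $U\leq\frac{\beta}{\alpha}\exp(\alpha/\beta-1)$ being precisely what makes $\min f\leq 0$ and hence produces the trapping equilibrium. Two points of comparison. First, your explicit observation that $c\mapsto\Lambda(c)$ is strictly increasing (by Riccati comparison) is a cleaner justification of the step ``for $c<c_l$, $c$ satisfies the two inequalities'' than the paper's terse ``by definition of $c_l$,'' which implicitly relies on the constraint set being downward-closed; your monotonicity argument supplies that. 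Second, you never actually establish the assertion $0<c_l\leq\kappa^2/(2\sigma^2)$, which is part of the statement: the paper does this by noting $\Lambda(c)\to 0$ as $c\to 0^+$, hence $M_J(\Lambda(c))\to M_J(0)=1<\frac{\beta}{\alpha}\exp(\alpha/\beta-1)$ (using $\alpha<\beta$), so positive $c$ lie in the constraint set. Your continuity of $\Lambda$ gives this at once, but it should be stated since $c_l>0$ is the crucial fact the whole change-of-measure construction rests on. Apart from that small omission, the argument is sound; your a priori bounds approach to (iv) is logically equivalent to the paper's Hartman continuation/Gronwall contradiction, just packaged via a scalar comparison theorem.
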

\begin{proof}
See Lemma \ref{Alode2} in the appendix. 
\end{proof}

\begin{obs}
One could also assume that the domain of $M_J$ is big enough to make the function $t\mapsto M_J(\eta G(t))$ well defined on the interval $[0,T]$ for any value of $c\leq\frac{\kappa^2}{2\sigma^2}$.  Then one can prove the existence of $H$ by applying the Picard-Lindelöf theorem (see \cite[Chapter II, Theorem 1.1]{hartman}) which yields the following conditions
\begin{align*}
    c\leq\frac{\kappa^2}{2\sigma^2} \ \ \text{and} \ \ \beta+f(U)\alpha\leq \frac{1}{T},
\end{align*}
where $f(U)$ is some function of $U$. Therefore, there would be more admissible values of $c$ but $\alpha$ and $\beta$ would have to satisfy an inequality involving $T$, which is quite restrictive and for the sake of generality we prefer to avoid. 
\end{obs}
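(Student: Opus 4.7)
The plan is to verify the observation's claim: under the stronger assumption that $M_J$ is well-defined on a domain containing $\eta G(0)$ for every $c \leq \kappa^2/(2\sigma^2)$, a direct Picard--Lindelöf argument proves existence of $H$ on $[0,T]$, but only under an additional coupling $\beta + f(U)\alpha \leq 1/T$. First I would note that parts (i)--(iii) of Lemma \ref{lode2} carry over essentially unchanged for every $c \leq \kappa^2/(2\sigma^2)$: the explicit formula still yields a bounded, nonnegative $G$ that attains its supremum at $t=0$, and the stronger domain assumption guarantees $U := \sup_{t \in [0,T]} M_J(\eta G(t)) = M_J(\eta G(0)) < \infty$. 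I would then convert the terminal-value problem \eqref{ode2l} into an initial-value problem via $\tilde{H}(s) := H(T-s)$, obtaining
\begin{align*}
\tilde{H}'(s) = F(s, \tilde{H}(s)), \qquad \tilde{H}(0)=0, \qquad F(s,y) := -\beta y + M_J(\eta G(T-s))\exp(\alpha y) - 1.
\end{align*}

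The right-hand side is continuous in $s$ on $[0,T]$ and $C^{1}$ in $y$, with $\partial_y F(s,y) = -\beta + \alpha M_J(\eta G(T-s))\exp(\alpha y)$. Hence $F$ is uniformly Lipschitz in $y$ on any strip $[0,T] \times [-R,R]$ with constant $L_R := \beta + \alpha U \exp(\alpha R)$, and uniformly bounded there by $M_R := \beta R + U\exp(\alpha R) + 1$. To apply Hartman's Theorem~1.1 globally on $[0,T]$ I would recast the IVP as a fixed-point equation $\Phi[y](s) := \int_0^s F(u, y(u))\,du$ on the closed ball $B_R \subset C([0,T])$ with the sup norm. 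Two inequalities must hold simultaneously: invariance $\Phi(B_R) \subset B_R$, requiring $T M_R \leq R$, and strict contraction, requiring $T L_R < 1$, i.e.
\begin{align*}
\beta + \alpha\, U \exp(\alpha R) < \frac{1}{T}.
\end{align*}

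Both constraints are jointly solvable for a suitable $R^{*}$ provided $T\beta$ is small and $U$ is not too large; the Banach fixed-point theorem then yields a unique $\tilde{H} \in C([0,T])$, whence $H(t) := \tilde{H}(T-t)$ solves \eqref{ode2l}. Setting $f(U) := U \exp(\alpha R^{*})$, the contraction condition rewrites as the advertised $\beta + f(U)\alpha \leq 1/T$. The main obstacle is extracting a clean closed form for $f$: since $R$ enters exponentially in both the invariance and contraction constraints, the optimal $R^{*}$ is only implicitly defined, and one must either fix a convenient $R$ (sacrificing sharpness) or optimize implicitly. This justifies the concluding remark of the observation: the route does enlarge the admissible set of $c$ to all of $(-\infty, \kappa^2/(2\sigma^2)]$, but at the price of the coupled, time-horizon-dependent condition $\beta + f(U)\alpha \leq 1/T$, which is why the authors prefer the alternative argument built into the definition of $c_l$.
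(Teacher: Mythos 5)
This observation is stated in the paper as a remark without proof, so there is nothing to match line by line; your proposal is a sensible and essentially correct fleshing-out of exactly the route the remark alludes to. Your reduction of the terminal-value problem \eqref{ode2l} to an initial-value problem, the Lipschitz bound $L_R=\beta+\alpha U e^{\alpha R}$ on a strip, and the identification of the advertised inequality with the contraction condition $TL_R<1$ (giving $f(U)=Ue^{\alpha R^*}$) is the natural reading of the claim $\beta+f(U)\alpha\leq 1/T$; note that Hartman's Theorem 1.1, taken literally, only yields existence on an interval of length $\min(a,b/M)$, so your Banach fixed-point recasting on $C([0,T])$ is indeed what produces a condition of the stated form, and you are right that it comes coupled with the ball-invariance requirement $TM_R\leq R$, so the single displayed inequality is schematic (the ``some function $f(U)$'' hides the implicit choice of $R^*$). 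It is worth contrasting this with what the paper actually does for $c<c_l$ in Lemma \ref{Alode2}(iv): there the authors avoid any $T$-dependent smallness condition altogether by sandwiching the solution between explicit sub- and super-solutions $h_m$ and $h_M$ and exploiting the stable equilibrium of $f_M(x)=Ue^{\alpha x}-\beta x-1$, which exists precisely when $U\leq\frac{\beta}{\alpha}\exp\left(\frac{\alpha}{\beta}-1\right)$; that inequality is what generates the definition of $c_l$. So the trade-off you describe is exactly the one intended: your (the remark's) route admits every $c\leq\frac{\kappa^2}{2\sigma^2}$ with $U<\infty$, but only under a horizon-dependent constraint on $\alpha,\beta$, whereas the comparison argument imposes no condition on $T$ at the cost of restricting $c$ below $c_l$. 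One small caution: your claim that parts (i)--(iii) of Lemma \ref{lode2} ``carry over essentially unchanged'' should be limited to the formula for $G$, its monotonicity and the finiteness of $U$; the upper bound $U\leq\frac{\beta}{\alpha}\exp\left(\frac{\alpha}{\beta}-1\right)$ in (iii) is tied to $c<c_l$ and is precisely what you are dispensing with, and at the boundary case $c=\frac{\kappa^2}{2\sigma^2}$ the closed form for $G$ must be replaced by its $D(c)\to 0$ limit. None of this affects the substance of your argument.
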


Note that, a priori, there is not an explicit expression of $c_l$ in the previous lemma because it is not possible to solve the inequalities 
\begin{align*}
    \Lambda(c)< \epsilon_J \hspace{0.5cm}\text{  and  }\hspace{0.5cm} M_J\left(\Lambda(c)\right)\leq\frac{\beta}{\alpha}\exp\left(\frac{\alpha}{\beta}-1\right).
\end{align*}
Moreover, $c_l$ depends on $M_J$, $\eta$, $\kappa$, $\sigma$, $T$, $\epsilon_J$, $\alpha$ and $\beta$. However, one can get a suboptimal and explicit value of $c_l$ that does not depend on $T$. We obtain an expression for that suboptimal value and we give some examples.
\begin{cor} Define $c_s$ by 
\label{explicit}
\begin{align*}
    c_s=\min\left\{\frac{\kappa\epsilon_J}{2\eta},\frac{\kappa }{2\eta}M_J^{-1}\left(\frac{\beta}{\alpha}\exp\left(\frac{\alpha}{\beta}-1\right)\right),\frac{\kappa^2}{2\sigma^2}\right\}.
\end{align*}
Then, $0<c_s<c_l$.
\end{cor}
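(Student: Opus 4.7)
My approach is to show that $c_s$ itself lies inside the admissible set defining $c_l$ with strict slack in both constraints, and then invoke continuity to upgrade this to a strict inequality.

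First I would verify $c_s > 0$. The terms $\kappa\epsilon_J/(2\eta)$ and $\kappa^2/(2\sigma^2)$ are positive by the standing assumptions on the model parameters. For the middle term I would observe that the map $y \mapsto y^{-1} e^{y-1}$ is strictly decreasing on $(0,1)$ with value $1$ at $y=1$; since $\alpha/\beta \in (0,1)$, this yields $K := (\beta/\alpha)\exp(\alpha/\beta-1) > 1 = M_J(0)$. Combined with the strict monotonicity of $M_J$ on $(-\infty,\epsilon_J)$ (immediate from $J_1 > 0$ a.s.), this gives $M_J^{-1}(K) > 0$, hence $c_s > 0$.

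The crux is the $T$-independent bound
\begin{align*}
\Lambda(c) \le \frac{2\eta c}{\kappa}, \qquad c \in \left(0, \tfrac{\kappa^2}{2\sigma^2}\right],
\end{align*}
with strict inequality for $c < \kappa^2/(2\sigma^2)$. I would prove it by rewriting the denominator of $\Lambda(c)$ as $D(c)(1+e^{D(c)T}) + \kappa(e^{D(c)T}-1) \ge \kappa(e^{D(c)T}-1)$, where the discarded summand is non-negative and strictly positive whenever $D(c) > 0$. Armed with this bound, the two inequalities $c_s \le \kappa\epsilon_J/(2\eta)$ and $c_s \le (\kappa/(2\eta)) M_J^{-1}(K)$ translate into $\Lambda(c_s) \le \epsilon_J$ and $\Lambda(c_s) \le M_J^{-1}(K)$, and the latter yields $M_J(\Lambda(c_s)) \le K$ via the strict monotonicity of $M_J$. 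Thus $c_s$ satisfies both conditions defining the set whose supremum is $c_l$, giving $c_s \le c_l$.

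The main obstacle is upgrading to strict inequality. Here I would exploit that the bound on $\Lambda$ is actually strict for $c < \kappa^2/(2\sigma^2)$, so at $c = c_s$ both constraints hold with strict slack: $\Lambda(c_s) < \epsilon_J$ and $M_J(\Lambda(c_s)) < K$. Since $c \mapsto \Lambda(c)$ is continuous on $(0, \kappa^2/(2\sigma^2)]$ and $M_J$ is continuous on $(-\infty,\epsilon_J)$, there exists a right-neighbourhood $[c_s, c_s + \delta] \subset (0, \kappa^2/(2\sigma^2))$ on which both conditions persist; any such $c_s + \delta$ lies in the defining set, yielding $c_l \ge c_s + \delta > c_s$.
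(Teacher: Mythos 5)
Your proof is correct and mirrors the paper's own argument: both hinge on the $T$-independent bound $\Lambda(c)<2\eta c/\kappa$ (you obtain it by rewriting the denominator as $D(c)(1+e^{D(c)T})+\kappa(e^{D(c)T}-1)$ and dropping the first summand, the paper by an equivalent chain of implications), and both then check that $c_s$ satisfies the two constraints defining $c_l$ with strict slack. Your closing continuity argument spells out more explicitly why the slack forces strict inequality $c_s<c_l$, a detail the paper compresses into a single remark about the strictness of the bound.
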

\begin{proof}
See Lemma \ref{Aexplicit} in the appendix. 
\end{proof}

Note that for any $c<c_s$ we can apply Lemma \ref{lode2} because $c_s<c_l$. We now give some examples of the value $c_s$. 
\begin{exam}
Some examples:
\begin{enumerate}[(i)]
    \item If $J_1\sim\text{Exponential}(\lambda)$, then 
\begin{align*}
    c_s=\min\left\{\frac{\kappa\lambda}{2\eta}\left(1-\frac{\alpha}{\beta}\exp\left(1-\frac{\alpha}{\beta}\right)\right),\frac{\kappa^2}{2\sigma^2}\right\}.
\end{align*}
\item If $J_1\sim\text{Gamma}(\mu,\lambda)$ with $\mu,\lambda>0$ as the shape and the rate, respectively. Then
\begin{align*}
    c_s=\min\left\{\frac{\kappa\lambda }{2\eta}\left(1-\frac{1}{\left(\frac{\beta}{\alpha}\exp\left(\frac{\alpha}{\beta}-1\right)\right)^{1/\mu}}\right),\frac{\kappa^2}{2\sigma^2}\right\}.
\end{align*}
\item If $J_1=j>0$, then
\begin{align*}
    c_s=\min\left\{\frac{\kappa}{2\eta j}\left(\ln\left(\frac{\beta}{\alpha}\right)+\frac{\alpha}{\beta}-1\right),\frac{\kappa^2}{2\sigma^2}\right\}.
\end{align*}
\end{enumerate}
\end{exam}
\begin{proof}
See Example \ref{exA} in the appendix. 
\end{proof}

We have studied under which conditions the previous ODEs have a well defined solution on the interval $[0,T]$. We proceed studying the integrability of the exponential of the integrated variance following the method commented in the beginning of this section. 
\begin{prop}\label{novikov}
Let $c<c_l$, then
\begin{align*}
    \EE\left[\exp\left(c\int_0^Tv_udu\right)\right]<\infty.
\end{align*}
\end{prop}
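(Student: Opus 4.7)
The plan is to implement the ansatz outlined just before Lemma~\ref{lode2}: introduce
\[
M(t)=\exp\!\left(F(t)+G(t)v_t+H(t)\lambda_t+c\int_0^t v_u\,du\right),
\]
choose $F,G,H$ on $[0,T]$ with $F(T)=G(T)=H(T)=0$ so that $M$ is a (nonnegative) local martingale, and conclude via the supermartingale property that
\[
\EE\!\left[\exp\!\left(c\!\int_0^T v_u\,du\right)\right]=\EE[M(T)]\le M(0)=\exp\bigl(F(0)+G(0)v_0+H(0)\lambda_0\bigr)<\infty.
\]

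First I would apply It\^o's formula for semimartingales with jumps to the smooth function $(t,x,y)\mapsto\exp(F(t)+G(t)x+H(t)y+c\int_0^t v_u\,du)$ applied to $(t,v_t,\lambda_t)$. The continuous part gives a drift
\[
\bigl(F'(t)+G'(t)v_t+H'(t)\lambda_t\bigr)+G(t)\bigl(-\kappa(v_t-\bar v)\bigr)+H(t)\bigl(-\beta(\lambda_t-\lambda_0)\bigr)+\tfrac12\sigma^2 G(t)^2 v_t+c\,v_t
\]
times $M(t-)$, plus a local martingale $\sigma G(t)\sqrt{v_t}M(t-)\,dW_t$. At a jump time $\tau_i$ of $N$, the triple $(v,\lambda,L)$ jumps by $(\eta J_i,\alpha,J_i)$, so
\[
M(\tau_i)-M(\tau_i-)=M(\tau_i-)\bigl[\exp(\eta G(\tau_i)J_i+\alpha H(\tau_i))-1\bigr].
\]
Using independence of $\{J_i\}$ from the filtration generated by $N$ and the Hawkes intensity, the predictable compensator of the jump part yields the extra drift
\[
M(t-)\,\lambda_t\bigl[M_J(\eta G(t))\exp(\alpha H(t))-1\bigr]\,dt.
\]
Equating the total drift to zero and collecting the coefficients of $v_t$, $\lambda_t$ and $1$ gives exactly the ODEs
\begin{align*}
G'(t)&=-\tfrac12\sigma^2 G(t)^2+\kappa G(t)-c,\\
H'(t)&=\beta H(t)-M_J(\eta G(t))\exp(\alpha H(t))+1,\\
F'(t)&=-\kappa\bar v\,G(t)-\beta\lambda_0\,H(t),
\end{align*}
with terminal condition $0$ at $T$. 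For $c<c_l$, Lemma~\ref{lode2} furnishes the unique bounded solutions $G$ and $H$ on $[0,T]$, and then $F(t):=\int_t^T\!(\kappa\bar v\,G(s)+\beta\lambda_0\,H(s))\,ds$ is well defined.

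With this choice of $F,G,H$, the drift in the It\^o decomposition of $M$ vanishes, so $M$ is a local martingale. I would localize by the stopping times $\tau_n:=\inf\{t\in[0,T]:v_t\ge n\text{ or }\lambda_t\ge n\}\wedge T$, for which the stopped process $M^{\tau_n}$ is a true martingale; here the boundedness of $F,G,H$ on $[0,T]$ together with the bounds on $v,\lambda$ up to $\tau_n$ ensure that the stochastic $dW$-integral and the compensated jump-integral driving $M^{\tau_n}$ have square-integrable (resp.\ integrable) integrands. Because $M\ge 0$, $M$ is then a supermartingale. Since $\tau_n\to T$ almost surely (using that the Hawkes process has finitely many jumps on $[0,T]$ and both $v$ and $\lambda$ are finite paths), Fatou's lemma gives $\EE[M(T)]\le \liminf_n \EE[M(\tau_n)]\le M(0)$, which is the desired bound.

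The main obstacle I anticipate is the rigorous handling of the jump term: one must verify that the compensator formula $M(t-)\lambda_t[M_J(\eta G(t))\exp(\alpha H(t))-1]\,dt$ is valid, which requires $M_J(\eta G(t))<\infty$ on $[0,T]$ (guaranteed by (ii)--(iii) of Lemma~\ref{lode2} since $c<c_l$ implies $\eta G(t)<\epsilon_J$) and that the compensated integral is a genuine local martingale on $[0,\tau_n]$. Once this is in place, the identification of the three ODEs and the supermartingale comparison are straightforward.
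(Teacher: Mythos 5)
Your proposal is correct and follows essentially the same route as the paper: the same ansatz $M(t)=\exp(F(t)+G(t)v_t+H(t)\lambda_t+c\int_0^t v_u\,du)$, the same It\^o decomposition with the compensated jump measure, the same three Riccati/affine ODEs for $G,H,F$, and the same supermartingale bound $\EE[M(T)]\le M(0)$. The explicit localizing sequence and Fatou's lemma you invoke are simply the standard unpacking of the paper's shorthand ``a nonnegative local martingale is a supermartingale,'' not a different argument.
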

\begin{proof}
By Corollary \ref{pos}, $v$ is strictly positive and the expectation is finite for $c\leq0$. We focus on the case when $0<c<c_l$. We first define the function $f\colon[0,T]\times\R^3\to\R$ 
\begin{align*}
    f(t,x,y,z)=\exp\left(F(t)+G(t)x+H(t)y+cz\right),
\end{align*}
where $G$ and $H$ are the solution of the differential equations given in Lemma \ref{lode2}, that is, 
\begin{align}
    G'(t) & =-\frac{1}{2}\sigma^2G^2(t)+\kappa G(t)-c \label{node1l} \\
    G(T) & = 0 \notag
\end{align}
and 
\begin{align}
    H'(t) & =\beta H(t)-M_J\left(\eta G(t)\right)\exp\left(\alpha H(t)\right) +1 \label{node2l}\\
    H(T) & = 0, \notag
\end{align}
and $F$ is given by 
\begin{align}\label{ode1}
    F'(t) & =-\kappa\Bar{v}G(t)-\beta\lambda_0H(t) \\
    F(T) & =0\notag.
\end{align}
Note that with the assumption we have made on $c$, the functions $F$, $G$ and $H$ are well defined on $[0,T]$.

We also define the integrated variance $V_t:=\int_0^tv_udu$, $Y_t:=(t,v_t,\lambda_t,V_t)$  and the process $M=\{M(t), t\in[0,T]\}$ by $M(t)=f(t,v_t,\lambda_t,V_t)=f(Y_t)$. Applying Itô formula to the process $M$ we get
\begin{align*}
    M(t)= \ &M(0)+\int_0^t\partial_tf(Y_{s-})ds+\int_0^t\partial_xf(Y_{s-})dv_s+\int_0^t\partial_y f(Y_{s-})d\lambda_s \\
    &+\int_0^t\partial_zf(Y_{s-})dV_s+\frac{1}{2}\int_0^t\partial^2_{xx}f(Y_{s-})d[v]_s^{\text{c}} \\
    &+\sum_{0<s\leq t}\left[f(Y_s)-f(Y_{s-})-\partial_xf(Y_{s-})\Delta v_s-\partial_y f(Y_{s-})\Delta\lambda_s\right].
\end{align*}
We have used that
\begin{align*}
    [\lambda]_t & =\alpha^2[N]_t=\alpha^2N_t \implies [\lambda]_t^{\text{c}}  =0, \\
    [V]_t & = 0, \\
    [v,\lambda]_t & =\alpha\eta[L,N]_t=\alpha\eta L_t\implies [v,\lambda]_t^{\text{c}}  =0, \\
    [v,V]_t & =[\lambda,V]_t =0.
\end{align*}
Moreover, 
\begin{align*}
    [v]_t & =\int_0^t\sigma^2v_sds+\eta^2[L]_t \implies [v]_t^{\text{c}}=\int_0^t\sigma^2v_sds, \\
    dv_t & =-\kappa(v_t-\Bar{v})dt+\sigma\sqrt{v_t}dW_t+\eta dL_t, \\
    d\lambda_t & =-\beta(\lambda_t-\lambda_0)dt+\alpha dN_t.
\end{align*}

Hence, 
\begin{align*}
    M(t)-M(0) = &\int_0^t\Big[\partial_tf(Y_{s})-\kappa\partial_xf(Y_{s})(v_s-\Bar{v})-\beta\partial_y f(Y_{s})(\lambda_s-\lambda_0) \\
    &+\partial_zf(Y_{s})v_s+\frac{1}{2}\partial_{xx}^2f(Y_{s})\sigma^2v_s\Big]ds\\ 
        &+\int_0^t\partial_xf(Y_{s-})\sigma\sqrt{v_s}dW_s+\int_0^t\partial_yf(Y_{s-})\eta dL_s+\int_0^t\partial_y f(Y_{s-})\alpha dN_s \\
    &+\sum_{0<s\leq t}\left[f(Y_s)-f(Y_{s-})-\partial_xf(Y_{s-})\Delta v_s-\partial_y f(Y_{s-})\Delta \lambda_s\right].
\end{align*}
Since $\Delta v_s=\eta\Delta L_s$ and $\Delta \lambda_s=\alpha\Delta N_s$, then
\begin{align*}
    \int_0^t\partial_xf(Y_{s-})\eta dL_s  & = \sum_{0<s\leq t}\partial_xf(Y_{s-})\Delta v_s \\
    \int_0^t\partial_y f(Y_{s-})\alpha dN_s & = \sum_{0<s\leq t}\partial_y f(Y_{s-})\Delta \lambda_s,
\end{align*}
and we get
\begin{align*}
    M(t)-M(0) = &\int_0^t\Big[\partial_tf(Y_{s})-\kappa\partial_xf(Y_{s})(v_s-\Bar{v})-\beta\partial_y f(Y_{s})(\lambda_s-\lambda_0) \\
    &+\partial_zf(Y_{s})v_s+\frac{1}{2}\partial_{xx}^2f(Y_{s})\sigma^2v_s\Big]ds\\ 
        &+\int_0^t\partial_xf(Y_{s-})\sigma\sqrt{v_s}dW_s +\sum_{0<s\leq t}\left[f(Y_s)-f(Y_{s-})\right].
\end{align*}
Next, we can write
\begin{align*}
    \sum_{0<s\leq t}\left[f(Y_s)-f(Y_{s-})\right] & =\sum_{0<s\leq t}\left[f(s,v_{s-}+\Delta v_s, \lambda_{s-}+\Delta \lambda_s,V_s)-f(Y_{s-})\right] \\
    & = \sum_{0<s\leq t}\left[f(s,v_{s-}+\eta \Delta L_s, \lambda_{s-}+\alpha\Delta N_s,V_s)-f(Y_{s-})\right] \\
    & = \sum_{0<s\leq t} g(s,\Delta L_s,\Delta N_s),
\end{align*}
where 
\begin{align*}
    g(s,u_1,u_2):=f(s,v_{s-}+\eta u_1, \lambda_{s-}+\alpha u_2,V_s)-f(Y_{s-}).
\end{align*} 
We now define $U_s=(L_s,N_s)$ and for $t\in[0,T]$, $A\in\mathcal{B} (\R^2\setminus\{0,0\})$ 
\begin{align*}
    N^U(t,A)=\#\{0<s\leq t, \Delta U_s\in A\}.
\end{align*}
We add and subtract the compensator of the counting measure $N^U$ to split the expression into a local martingale plus a predictable process of finite variation. Note that the compensator of the Hakwes process is given by $\Lambda^N_t=\int_0^t\lambda_udu$, see \cite[Theorem 3]{Hawkesfinance}. One can check the the compensator of the compound Hawkes process is given by $\Lambda^L_t=\EE[J_1]\int_0^t\lambda_udu$. Thus,
\begin{align*}
    \sum_{0<s\leq t}\left[f(Y_s)-f(Y_{s-})\right] & = \int_0^t\int_{(0,\infty)^2}g(s,u_1,u_2)N^U(ds,du) \\
    & = \int_0^t\int_{(0,\infty)^2} g(s,u_1,u_2)\left(N^U(ds,du)-\lambda_sP_{J_1}(du_1)\delta_1(du_2)ds\right) \\ &+\int_0^t\int_{(0,\infty)^2} g(s,u_1,u_2)\lambda_sP_{J_1}(du_1)\delta_1(du_2)ds.
\end{align*}
Note that
\begin{align*}
    \int_0^t\int_{(0,\infty)^2} g(s,u_1,u_2)\lambda_sP_{J_1}(du_1)\delta_1(du_2)ds = \int_0^t\int_{(0,\infty)} g(s,u_1,1)\lambda_sP_{J_1}(du_1)ds.
\end{align*}
We  conclude that 
    \begin{align*}
    M(t)-M(0) = &\int_0^t\Big[\partial_tf(Y_{s})-\kappa\partial_xf(Y_{s})(v_s-\Bar{v})-\beta\partial_y f(Y_{s})(\lambda_s-\lambda_0) \\
    &+\partial_zf(Y_{s})v_s+\frac{1}{2}\partial_{xx}^2f(Y_{s})\sigma^2v_s+\int_{(0,\infty)} g(s,u_1,1)\lambda_sP_{J_1}(du_1)\Big]ds \\
    &+\int_0^t\partial_xf(Y_{s-})\sigma\sqrt{v_s}dW_s \\
        &+\int_0^t\int_{(0,\infty)} g(s,u_1,1)\left(N^U(ds,du)-\lambda_sP_{J_1}(du_1)ds\right).
\end{align*}
Recall that $f(t,x,y,z)=\exp\left(F(t)+G(t)x+H(t)y+cz\right)$, thus,
\begin{align*}
    \partial_tf(t,x,y,z) & =\left(F'(t)+G'(t)x+H'(t)y\right)f(t,x,y,z), \\
    \partial_xf(t,x,y,z) & =G(t)f(t,x,y,z), \\ 
     \partial_y f(t,x,y,z) & =H(t)f(t,x,y,z), \\
      \partial_zf(t,x,y,z)& =cf(t,x,y,z).
\end{align*}
Furthermore,
\begin{align*}
   g(s,u_1,1) & =f(s,v_{s-}+\eta u_1,\lambda_{s-}+\alpha,V_s)-f(s,v_{s-},\lambda_{s-},V_s) \\
   & = f(s,v_{s-},\lambda_{s-},V_s)\left[\exp\left(\eta u_1G(s)\right)\exp\left(\alpha H(s)\right)-1\right].
\end{align*}
Therefore,
\begin{align*}
    \int_{(0,\infty)} g(s,u_1,1)\lambda_sP_{J_1}(du_1) = f(Y_{s-})\lambda_s\left[M_J\left(\eta G(s)\right)\exp\left(\alpha H(s)\right)-1\right].
\end{align*}
Using the specific form of the derivatives and the previous result we see that the drift part of $M(t)-M(0)$ vanishes. That is:
\begin{itemize}
\item Coefficient multiplying $v$ in the drift:
\begin{align*}
    f(Y_{t})\left[G'(t)-\kappa G(t)+\frac{1}{2}G(t)^2\sigma^2+c\right]=0,
\end{align*}
where we have replaced equation \eqref{node1l}. 
\item Coefficient multiplying $\lambda$ in the drift:
\begin{align*}
    f(Y_{t})\left[H'(t)-\beta H(t)+M_J\left(\eta G(t)\right)\exp\left( \alpha H(t)\right)-1\right] =0 ,
\end{align*}
where we have replaced equation \eqref{node2l}.
\item Free coefficient in the drift:
\begin{align*}
    f(Y_{t})\left[F'(t)+\kappa\Bar{v}G(t)+\beta\lambda_0H(t)\right]=0,
\end{align*}
where we have replaced equation \eqref{ode1}.
\end{itemize}
Therefore, the process $M$ can be written as
\begin{align*}
    M(t)  = & \ M(0) +\int_0^t\partial_vf(Y_{s-})\sigma\sqrt{v_s}dW_s \\
        &+\int_0^t\int_{(0,\infty)} g(s,u_1,1)\left(N^U(ds,du)-\lambda_sP_{J_1}(du_1)ds\right).
\end{align*}
We conclude that $M$ is a local martingale. Moreover, since $M$ is non-negative, it is a supermartingale. Then,
\begin{align*}
    \EE[M(T)] &=\EE\left[\exp\left(F(T)+G(T)v_T+H(T)\lambda_T+cV_T\right)\right] \\
    & = \EE\left[\exp\left(c\int_0^Tv_udu\right)\right] \\ 
    & \leq M(0) =\exp\left(F(0)+G(0)v_0+H(0)\lambda_0\right) <\infty.
\end{align*}
Note that $F(0),G(0),H(0)<\infty$ because the functions $F,G$ and $H$ are well defined on the entire interval $[0,T]$.
\end{proof}

We write $\widehat{S}=\{\widehat{S}_t, t\in[0,T]\}$ for the discounted stock price, that is, $\widehat{S}_t=e^{-rt}S_t$. We prove the existence of a family of equivalent local martingale measures $\QQ(a)$ parametrized by a number $a$. Namely, $\QQ(a)\sim\PP$ is such that $\widehat{S}$ is a $(\mathcal{F},\QQ(a))$-local martingale.

\begin{thm}\label{risk} Let $a\in\RR$ and define $\theta_t^{(a)}:=\frac{1}{\sqrt{1-\rho^2}}\left(\frac{\mu_t-r}{\sqrt{v_t}}-a\rho\sqrt{v_t}\right)$, 
\begin{align*}
    Y_t^{(a)} &:=\exp\left(-\int_0^t\theta_s^{(a)}dB_s-\frac{1}{2}\int_0^t(\theta_s^{(a)})^2ds\right), \\
    Z_t^{(a)} &:=\exp\left(-a\int_0^t\sqrt{v_s}dW_s-\frac{1}{2}a^2\int_0^tv_sds\right)
\end{align*}
and $X_t^{(a)}:=Y_t^{(a)}Z_t^{(a)}$. The set 
\begin{align}\label{elmmset}
    \mathcal{E}:=\left\{\QQ(a) \hspace{0.2cm}\text{given by}\hspace{0.2cm} \frac{d\QQ(a)}{d\PP}=X_T^{(a)} \hspace{0.2cm}\text{with}\hspace{0.2cm} |a|<\sqrt{2c_l}\right\} 
\end{align}
is a set of equivalent local martingale measures. 
\end{thm}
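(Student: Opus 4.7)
The plan is to verify in three stages that each $X_T^{(a)}$ defines a probability measure $\QQ(a)$ equivalent to $\PP$ under which $\widehat{S}$ is a local martingale. First, I would establish that $X^{(a)}$ is a strictly positive $\PP$-local martingale. Because $B$ and $W$ are independent, their stochastic integrals have zero cross-variation, and $X^{(a)}_t$ coincides with the stochastic exponential of the continuous local martingale $-\int_0^t\theta_s^{(a)}\,dB_s-a\int_0^t\sqrt{v_s}\,dW_s$. Well-definedness of the integrand in $Y^{(a)}$ requires $\int_0^T v_s^{-1}\,ds<\infty$ $\PP$-a.s., which follows from the pathwise comparison $\widetilde{v}\leq v$ of Proposition \ref{p1} together with the strict positivity and continuity of $\widetilde{v}$ on the compact interval $[0,T]$ under the Feller condition (cf. Corollary \ref{pos}). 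Strict positivity of $X_T^{(a)}$ is then automatic, so $\QQ(a)\sim\PP$ will follow as soon as $X^{(a)}$ is shown to be a true martingale.

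The main step is to prove $\EE[X_T^{(a)}]=1$. A direct Novikov bound on the combined exponent fails because $(\theta_s^{(a)})^2$ contains a $(\mu_s-r)^2/v_s$ term which is not controlled by Proposition \ref{novikov}. I would bypass this by conditioning on $\mathcal{F}^{(W,L)}_T$, exploiting the independence of $B$ from $(W,L)$. Both $Z_T^{(a)}$ and the integrand $\theta^{(a)}$ are $\mathcal{F}^{(W,L)}_T$-measurable, while $B$ remains a standard Brownian motion conditionally on $\mathcal{F}^{(W,L)}_T$. Hence, conditionally, $\int_0^T\theta_s^{(a)}\,dB_s$ is centered Gaussian with variance $\int_0^T(\theta_s^{(a)})^2\,ds$, and the Gaussian moment-generating function yields
\begin{align*}
    \EE\left[Y_T^{(a)}\,\big|\,\mathcal{F}^{(W,L)}_T\right]=1 \quad \PP\text{-a.s.}
\end{align*}
Pulling the $\mathcal{F}^{(W,L)}_T$-measurable factor $Z_T^{(a)}$ outside the inner conditional expectation,
\begin{align*}
    \EE\left[X_T^{(a)}\right]=\EE\left[Z_T^{(a)}\,\EE\left[Y_T^{(a)}\,\big|\,\mathcal{F}^{(W,L)}_T\right]\right]=\EE\left[Z_T^{(a)}\right].
\end{align*}
Since $|a|<\sqrt{2c_l}$ implies $a^2/2<c_l$, Proposition \ref{novikov} applied with $c=a^2/2$ gives $\EE\bigl[\exp\bigl(\tfrac{1}{2}a^2\int_0^T v_s\,ds\bigr)\bigr]<\infty$, which is Novikov's condition for $Z^{(a)}$. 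Thus $\EE[Z_T^{(a)}]=1$, and consequently $\EE[X_T^{(a)}]=1=X_0^{(a)}$. A non-negative local martingale attaining its initial value in expectation at the terminal time is a true martingale, so $X^{(a)}$ is a $\PP$-martingale and $\QQ(a)$ is a well-defined probability measure equivalent to $\PP$.

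The final step is a direct application of the two-dimensional Girsanov theorem. Under $\QQ(a)$, the processes $\widetilde{B}_t:=B_t+\int_0^t\theta_s^{(a)}\,ds$ and $\widetilde{W}_t:=W_t+a\int_0^t\sqrt{v_s}\,ds$ are independent $(\mathcal{F},\QQ(a))$-Brownian motions, independence being preserved because $[B,W]\equiv 0$. Substituting into \eqref{2} and using the definition of $\theta^{(a)}$, the drift of $\widehat{S}$ cancels and one obtains
\begin{align*}
    d\widehat{S}_t=\widehat{S}_t\sqrt{v_t}\left(\sqrt{1-\rho^2}\,d\widetilde{B}_t+\rho\,d\widetilde{W}_t\right),
\end{align*}
so $\widehat{S}$ is a $\QQ(a)$-local martingale. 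The principal obstacle in this scheme is precisely the middle step: the $1/\sqrt{v}$ factor in $\theta^{(a)}$ prevents a naive Novikov argument on the combined density, and the conditioning trick, which reduces the problem to Novikov for $Z^{(a)}$ alone, is what makes the integrability afforded by Proposition \ref{novikov} sufficient.
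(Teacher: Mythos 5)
Your proof is correct and follows essentially the same route as the paper: decompose $X^{(a)}=Y^{(a)}Z^{(a)}$, condition on $\mathcal{F}_T^W\vee\mathcal{F}_T^L$ to show $\EE[Y_T^{(a)}\mid\mathcal{F}_T^W\vee\mathcal{F}_T^L]=1$ via the Gaussian moment-generating function, reduce to Novikov for $Z^{(a)}$ using Proposition \ref{novikov} with $c=a^2/2<c_l$, and then apply Girsanov. Your remarks explaining why a direct Novikov bound on the full exponent fails and why $\int_0^T v_s^{-1}\,ds<\infty$ holds are useful clarifications of points the paper leaves implicit, but do not change the underlying argument.
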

\begin{proof}
Define the process $(B^{\QQ(a)},W^{\QQ(a)})=\{(B_t^{\QQ(a)},W_t^{\QQ(a)}), t\in[0,T]\}$, by
    \begin{align}\label{BMQ}
    dB_t^{\QQ(a)}&=dB_t+\theta_t^{(a)}dt, \notag\\
    dW_t^{\QQ(a)}&=dW_t+a\sqrt{v_t}dt.
\end{align}
The dynamics of the stock is now given by
\begin{align*}
    \frac{dS_t}{S_t}=\left[\mu_t-\sqrt{v_t}\left(\sqrt{1-\rho^2} \theta_t^{(a)}+a\rho \sqrt{v_t}\right)\right]dt+\sqrt{v_t}\left(\sqrt{1-\rho^2} dB_t^{\QQ(a)}+\rho dW_t^{\QQ(a)}\right).
\end{align*}
Note that 
\begin{align}\label{cond}
    \mu_t-\sqrt{v_t}\left(\sqrt{1-\rho^2} \theta_t^{(a)}+a\rho \sqrt{v_t}\right)=r,
\end{align}
which is a necessary condition for $\QQ(a)$ to be an equivalent local martingale measure. The choice of the market price of risk processes $\theta^{(a)}$ and $a\sqrt{v_t}$ is the same as in \cite[Equation (3.4) and (3.7)]{onchanges}. One of the reasons to make that choice under the standard Heston model is to maintain the same variance dynamics after the change of measure.

To apply Girsanov's theorem we need to check that the process $X^{(a)}$ is a $(\mathcal{F},\PP)$-martingale.
Since $X^{(a)}$ is a positive $(\mathcal{F},\PP)$-local martingale with $X^{(a)}_0=1$, it is a $(\mathcal{F},\PP)$-supermartingale and it is a $(\mathcal{F},\PP)$-martingale if and only if
\begin{align}\label{cond2}
    \EE\left[X^{(a)}_T\right]=1.
\end{align}
Using that $Z^{(a)}_T$ is $\mathcal{F}_T^{W}\vee\mathcal{F}_T^L$-measurable we have
\begin{align}\label{eq3}
    \EE\left[X^{(a)}_T\right]=\EE\left[Y^{(a)}_TZ^{(a)}_T\right]=\EE\left[\EE\left[Y^{(a)}_TZ^{(a)}_T|\mathcal{F}_T^{W}\vee\mathcal{F}_T^L\right]\right]=\EE\left[Z^{(a)}_T\EE\left[Y^{(a)}_T|\mathcal{F}_T^{W}\vee\mathcal{F}_T^L\right]\right].
\end{align}
By Corollary \ref{pos} the variance process $v$ is strictly positive. This implies that $\int_0^T(\theta_s^{(a)})^2ds<\infty,  \PP$-a.s, and since  $\theta^{(a)}$ is $\{\mathcal{F}_t^{W}\vee\mathcal{F}_t^L\}_{t\in[0,T]}$-adapted,
\begin{align*}
    Y^{(a)}_T|\mathcal{F}_T^{W}\vee\mathcal{F}_T^L\sim\text{Lognormal}\left(-\frac{1}{2}\int_0^T(\theta_s^{(a)})^2ds,\int_0^T(\theta_s^{(a)})^2ds\right),
\end{align*}
and we obtain that 
    $\EE[Y^{(a)}_T|\mathcal{F}_T^{W}\vee\mathcal{F}_T^L]=1.$
Therefore, replacing the last expression in \eqref{eq3} we obtain $\EE[X^{(a)}_T]=\EE[Z^{(a)}_T]$ and we only need to check that $\EE[Z^{(a)}_T]=1$. Since $|a|<\sqrt{2c_l}$, $\frac{1}{2}a^2< c_l$, we can use Proposition \ref{novikov} and get that
\begin{align}\label{nov}
    \EE\left[\exp\left(\frac{1}{2}a^2\int_0^Tv_udu\right)\right]<\infty.
\end{align}
Hence, Novikov's condition is satisfied, $Z^{(a)}$ is a $(\mathcal{F},\PP)$-martingale and we conclude that
$\EE[X^{(a)}_T]=\EE[Z^{(a)}_T]=1.$

Then, $X^{(a)}$ is a $(\mathcal{F},\PP)$-martingale, $\QQ(a)\sim\PP$ is an equivalent probability measure defined by
\begin{align*}
    \frac{d\QQ}{d\PP}&=X^{(a)}_T, \\ dX^{(a)}_t&=X^{(a)}_t\left[-\theta_t^{(a)}dB_t-a\sqrt{v_t}dW_t\right]
\end{align*}
and $(B^{\QQ(a)},W^{\QQ(a)})$ defined in \eqref{BMQ} is a two-dimensional standard $(\mathcal{F},\QQ(a))$-Brownian motion. The dynamics of the stock under $\QQ(a)$ is given by
\begin{align*}
    \frac{dS_t}{S_t}=rdt+\sqrt{v_t}\left(\sqrt{1-\rho^2} dB_t^{\QQ(a)}+\rho dW_t^{\QQ(a)}\right).
\end{align*}
This implies that the discounted stock $\widehat{S}$ is a $(\mathcal{F},\QQ(a))$-local martingale. Therefore, the set $\mathcal{E}$ defined in \eqref{elmmset} is a set of equivalent local martingale measures. 
\end{proof}
\begin{obs}
As we said in the beginning of this section, since $\frac{1}{2}a^2$ is multiplying the integrated variance in \eqref{nov} it is crucial that $c_l$ is strictly positive in Lemma \ref{lode2}.
\end{obs}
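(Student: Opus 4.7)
The plan is to unpack the dependency chain linking $c_l$, the Novikov step in Theorem~\ref{risk}, and the admissible range for $a$, so that the necessity of $c_l>0$ becomes transparent. First, I would recall that the range $|a|<\sqrt{2c_l}$ in the definition of $\mathcal{E}$ enters precisely through \eqref{nov}: verifying $\EE[Z^{(a)}_T]=1$ reduces to Novikov's condition, which is established by invoking Proposition~\ref{novikov} with $c=\tfrac12 a^2$. Since Proposition~\ref{novikov} requires $c<c_l$, the inequality $\tfrac12 a^2<c_l$ translates one-to-one into the stated range for $a$, so $c_l$ is the \emph{only} parameter that controls how large $\mathcal{E}$ can be.

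Second, I would argue by contrapositive: if $c_l$ were $\leq 0$, then $\sqrt{2c_l}$ would either fail to be real or be zero, so the set $\{a\in\R:|a|<\sqrt{2c_l}\}$ would be empty or reduce to $\{0\}$. In either case the construction of Theorem~\ref{risk} would fail to deliver a genuine one-parameter family of equivalent local martingale measures, which is the structural goal in this incomplete-market setting with three independent noise sources and only one tradable asset. In particular the Girsanov rotation using the market price of volatility risk $a\sqrt{v_t}$ would be unavailable, and one would be forced back to the trivial choice $a=0$, exactly the degenerate situation the paper is trying to avoid.

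The only additional point worth recording is why Lemma~\ref{lode2} actually produces strict positivity of $c_l$, since the observation presupposes this. At $c=0$ one has $\Lambda(0)=0<\epsilon_J$ and $M_J(\Lambda(0))=M_J(0)=1$, while the Hawkes stability bound $\alpha/\beta\in(0,1)$ forces $\tfrac{\beta}{\alpha}\exp(\alpha/\beta-1)>1$ (the map $x\mapsto e^{x-1}/x$ is strictly decreasing on $(0,1)$ with value $1$ at $x=1$). Thus both defining inequalities of $c_l$ are strict at $c=0$, and continuity of $c\mapsto\Lambda(c)$ and $c\mapsto M_J(\Lambda(c))$ on a neighbourhood of $0$ yields $c_l>0$. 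I do not foresee any real technical obstacle in turning this outline into a rigorous justification; the main item to flag is simply that it is the Hawkes stability condition $\alpha<\beta$, combined with Assumption~\ref{as}, that ultimately keeps $\mathcal{E}$ non-degenerate via the positivity of $c_l$.
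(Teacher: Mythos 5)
Your proposal is correct and follows essentially the same reasoning as the paper: the requirement $|a|<\sqrt{2c_l}$ in Theorem~\ref{risk} comes precisely from applying Proposition~\ref{novikov} with $c=\tfrac12 a^2<c_l$ to verify Novikov's condition, and your justification of $c_l>0$ (namely $\Lambda(c)\to 0$ as $c\to 0^+$, $M_J(0)=1$, continuity of $M_J$, and $\tfrac{\beta}{\alpha}\exp\bigl(\tfrac{\alpha}{\beta}-1\bigr)>1$ from the stability condition $\alpha<\beta$) is exactly the argument given in Lemma~\ref{Alode2} in the appendix. The only trivial imprecision is that for $c_l=0$ the set $\{a:|a|<\sqrt{2c_l}\}$ is empty rather than $\{0\}$, which does not affect your conclusion.
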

\begin{obs}
The dynamics of the variance under $\QQ(a)\in\mathcal{E}$ is given by 
\begin{align}\label{underQ}
    dv_t & =-\kappa\left(v_t-\Bar{v}\right)dt+\sigma\sqrt{v_t}\left(dW_t^{\QQ(a)}-a\sqrt{v_t}dt\right)+\eta dL_t \notag\\
    & =-\left(\kappa\left(v_t-\Bar{v}\right)+a\sigma v_t\right)dt+\sigma\sqrt{v_t}dW_t^{\QQ(a)}+\eta dL_t  \notag\\
    & =-\kappa^{(a)}\left(v_t-\Bar{v}^{(a)}\right)dt+\sigma\sqrt{v_t}dW_t^{\QQ(a)}+\eta dL_t,
\end{align}
where $\kappa^{(a)}=\kappa+a\sigma$ and $\Bar{v}^{(a)}=\frac{k\Bar{v}}{k+a\sigma}$. 
\end{obs}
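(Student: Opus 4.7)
The plan is to obtain the claimed SDE by a direct substitution based on the Girsanov change already established in Theorem \ref{risk}. By that theorem, the process $(B^{\QQ(a)},W^{\QQ(a)})$ defined in \eqref{BMQ} is a two-dimensional standard $(\mathcal{F},\QQ(a))$-Brownian motion. Equivalently, under $\QQ(a)$ we have the identity
\begin{equation*}
dW_t = dW_t^{\QQ(a)} - a\sqrt{v_t}\,dt,
\end{equation*}
which is the key relation I will plug into the $\PP$-dynamics \eqref{21} of $v$.

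Before substituting I would like to justify that the jump part $\eta\,dL_t$ is unchanged under $\QQ(a)$. The Radon-Nikodym density $X^{(a)}_T=Y^{(a)}_TZ^{(a)}_T$ depends only on $(B,W)$ and the variance $v$ through stochastic integrals against $B$ and $W$; the compound Hawkes process $L$ enters only via $v$ inside the integrands. Since by construction $L$ and $(B,W)$ are independent under $\PP$ (hence so is $L$ with respect to the filtration generated by the Girsanov drift), the predictable projection/compensator of $L$ is not modified by this purely Brownian Girsanov transformation. Concretely, this can be checked by noting that $\mathcal{E}(X^{(a)})$ has no jump component, so the compensators of the random measure $N^U$ associated with $L$ computed under $\PP$ and under $\QQ(a)$ coincide. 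Hence $L$ has the same law under $\QQ(a)$ as under $\PP$, and in particular $dL_t$ appears unchanged in the SDE for $v$.

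With both facts in hand, the calculation is an elementary rewriting of \eqref{21}. Starting from
\begin{equation*}
dv_t = -\kappa(v_t-\Bar{v})\,dt+\sigma\sqrt{v_t}\,dW_t+\eta\,dL_t,
\end{equation*}
I substitute $dW_t = dW_t^{\QQ(a)}-a\sqrt{v_t}\,dt$ to obtain the first line of \eqref{underQ}. Expanding $\sigma\sqrt{v_t}\bigl(dW_t^{\QQ(a)}-a\sqrt{v_t}\,dt\bigr)=\sigma\sqrt{v_t}\,dW_t^{\QQ(a)}-a\sigma v_t\,dt$ and collecting the drift gives the second line. Finally, factoring the drift as
\begin{equation*}
-\bigl(\kappa(v_t-\Bar{v})+a\sigma v_t\bigr) = -(\kappa+a\sigma)\Bigl(v_t-\tfrac{\kappa\Bar{v}}{\kappa+a\sigma}\Bigr) = -\kappa^{(a)}\bigl(v_t-\Bar{v}^{(a)}\bigr),
\end{equation*}
with $\kappa^{(a)}:=\kappa+a\sigma$ and $\Bar{v}^{(a)}:=\frac{\kappa\Bar{v}}{\kappa+a\sigma}$, yields the third line. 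The only subtle point, and the one worth emphasizing, is the invariance of the $L$-part under the change of measure; once that is noted the rest is just algebra.
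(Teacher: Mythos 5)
Your proof is correct and follows essentially the same route as the paper: the displayed identity is nothing more than the pathwise substitution $dW_t=dW_t^{\QQ(a)}-a\sqrt{v_t}\,dt$ (legitimate because $\QQ(a)\sim\PP$ and $W^{\QQ(a)}$ is a $\QQ(a)$-Brownian motion by Theorem \ref{risk}) followed by completing the drift, exactly as in the remark. One comment on your ``subtle point'': no statement about the law or compensator of $L$ under $\QQ(a)$ is needed here, since $\eta\,dL_t$ is literally the same pathwise integrator in both decompositions; moreover, your parenthetical that $L$ is independent of the Girsanov drift is false, because $\theta^{(a)}_t$ and $a\sqrt{v_t}$ depend on $v$ and hence on $L$ (indeed the paper emphasizes, in the proof of the subsequent theorem, that $W^{\QQ(a)}$ and $L$ are no longer independent under $\QQ(a)$, which is precisely why Proposition \ref{novikov} cannot be reapplied directly with the parameters $\kappa^{(a)},\Bar{v}^{(a)}$). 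Your other justification --- that the density process $X^{(a)}$ is continuous, so the compensator of the jump measure of $(L,N)$ is unchanged under $\QQ(a)$ --- is the right way to make that side remark rigorous, but it is extra to what the Observation asserts.
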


So far, we have proven that there exists a set of equivalent local martingales measures. However, we need to study when those measures are actually equivalent martingale measures. We prove that under the condition $\rho^2<c_l$, there exists a subset of $\mathcal{E}$ of equivalent martingale measures. The condition $\rho^2<c_l$ may look quite restrictive. However, in \cite[Theorem 3.6]{onchanges} an inequality involving the correlation factor $\rho$ also appears in the proof of the existence of equivalent martingales measures in the standard Heston model.

\begin{thm}
If $\rho^2<c_l$, the set 
\begin{align}\label{choicea}
\mathcal{E}_m:=\left\{\QQ(a)\in\mathcal{E}: |a|<\min\left\{\frac{\sqrt{2c_l}}{2},\sqrt{c_l-\rho^2}\right\}\right\}
\end{align}
is a set of equivalent martingale measures.
\end{thm}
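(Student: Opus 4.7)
The plan is to upgrade each $\QQ(a) \in \mathcal{E}$ from an equivalent local martingale measure to an equivalent martingale measure by showing that the discounted stock $\widehat{S}$ is a true $\QQ(a)$-martingale. Since Theorem~\ref{risk} already gives that $\widehat{S}$ is a non-negative $(\mathcal{F}, \QQ(a))$-local martingale, hence a $\QQ(a)$-supermartingale, it suffices to verify the equality $\EE^{\QQ(a)}[\widehat{S}_T] = S_0$.

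To compute this expectation I would first pass back to $\PP$ via Bayes' rule,
$$\EE^{\QQ(a)}[\widehat{S}_T] = \EE\left[X_T^{(a)}\,\widehat{S}_T\right] = \EE\left[Y_T^{(a)}\,Z_T^{(a)}\,\widehat{S}_T\right],$$
and express $\widehat{S}_T$ in terms of the $\PP$-Brownian motions as
$$\widehat{S}_T = S_0 \exp\left(\sqrt{1-\rho^2}\int_0^T \sqrt{v_s}\,dB_s + \rho \int_0^T \sqrt{v_s}\,dW_s + \int_0^T (\mu_s - r)\,ds - \frac{1}{2}\int_0^T v_s\,ds\right).$$
Conditioning on $\mathcal{F}_T^{W} \vee \mathcal{F}_T^{L}$, exactly as in the proof of Theorem~\ref{risk}, the factor $Z_T^{(a)}$ together with the $dW$- and $ds$-pieces of $\widehat{S}_T$ is measurable with respect to this $\sigma$-algebra, while the $dB$-integrand appearing in $Y_T^{(a)}\,\widehat{S}_T$ is conditionally deterministic, so the corresponding stochastic integral is conditionally Gaussian. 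Computing the Gaussian moment generating function and then invoking the market price of risk identity \eqref{cond}, $\sqrt{1-\rho^2}\,\sqrt{v_t}\,\theta_t^{(a)} + a\rho\,v_t = \mu_t - r$, the $\mu_s - r$ and $1/\sqrt{v_s}$ contributions should cancel, leaving
$$\EE^{\QQ(a)}[\widehat{S}_T] = S_0\,\EE\left[\exp\left((\rho - a)\int_0^T \sqrt{v_s}\,dW_s - \frac{1}{2}(\rho - a)^2\int_0^T v_s\,ds\right)\right].$$

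To finish, I would show that the stochastic exponential on the right is a true $\PP$-martingale, which by Novikov's condition reduces to $\EE\left[\exp\left(\frac{1}{2}(\rho - a)^2 \int_0^T v_s\,ds\right)\right] < \infty$. Proposition~\ref{novikov} yields this whenever $\frac{1}{2}(\rho - a)^2 < c_l$, and from $|a| < \sqrt{c_l - \rho^2}$ the elementary inequality $(\rho - a)^2 \leq 2(\rho^2 + a^2) < 2 c_l$ supplies exactly this bound. The complementary condition $|a| < \sqrt{2c_l}/2$ plays the role of an auxiliary integrability requirement: it is precisely the hypothesis under which Proposition~\ref{novikov} gives $\EE\left[\exp\left(2a^2\int_0^T v_s\,ds\right)\right] < \infty$, and hence the true $\PP$-martingality of the Dol\'eans exponential of $-2a\int_0^\cdot \sqrt{v_s}\,dW_s$, which is convenient for bounding intermediate Cauchy--Schwarz factors arising when splitting the product $Y_T^{(a)}\,Z_T^{(a)}\,\widehat{S}_T$.

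The main obstacle I anticipate is the precise algebraic cancellation in the conditional expectation step: $\theta_s^{(a)}$ contains a $1/\sqrt{v_s}$ term and $\mu_s - r$ enters quadratically through $(\theta_s^{(a)})^2$, so reducing
$$\frac{1}{2}\int_0^T \left(\sqrt{1-\rho^2}\,\sqrt{v_s} - \theta_s^{(a)}\right)^2 ds - \frac{1}{2}\int_0^T (\theta_s^{(a)})^2\,ds + \rho\int_0^T \sqrt{v_s}\,dW_s + \int_0^T(\mu_s - r)\,ds - \frac{1}{2}\int_0^T v_s\,ds$$
to the clean form $(\rho - a)\int_0^T \sqrt{v_s}\,dW_s - \frac{1}{2}(\rho - a)^2\int_0^T v_s\,ds$ requires careful bookkeeping and repeated use of \eqref{cond}. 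Once that simplification is complete, the Novikov verification is immediate from Proposition~\ref{novikov} and the hypothesis on $a$.
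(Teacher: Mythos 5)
Your argument is correct and takes a genuinely more direct route than the paper's. The paper first conditions under $\QQ(a)$ on $\mathcal{F}_T^{W^{\QQ(a)}}\vee\mathcal{F}_T^L$ to reduce the problem to $\EE^{\QQ(a)}[Z_T^{\QQ(a)}]=1$, where $Z^{\QQ(a)}$ is the Dol\'eans exponential of $\rho\int_0^{\cdot}\sqrt{v_s}\,dW_s^{\QQ(a)}$; verifying Novikov for $Z^{\QQ(a)}$ under $\QQ(a)$ is done by passing back to $\PP$, conditioning away $Y^{(a)}$, and applying Cauchy--Schwarz. That split is precisely what produces the two separate Novikov-type constraints $2a^2<c_l$ and $a^2+\rho^2<c_l$, hence the $\min$ in the definition of $\mathcal{E}_m$. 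Your route computes $\EE^{\QQ(a)}[\widehat{S}_T]$ directly under $\PP$, conditions on $\mathcal{F}_T^W\vee\mathcal{F}_T^L$ once, and arrives at a single $\PP$-Dol\'eans exponential. The cancellation you flag as the main obstacle does go through: expanding
\begin{align*}
\tfrac12\bigl(\sqrt{1-\rho^2}\sqrt{v_s}-\theta_s^{(a)}\bigr)^2-\tfrac12(\theta_s^{(a)})^2
=\tfrac12(1-\rho^2)v_s-\sqrt{1-\rho^2}\sqrt{v_s}\,\theta_s^{(a)},
\end{align*}
and substituting $\sqrt{1-\rho^2}\sqrt{v_s}\,\theta_s^{(a)}=\mu_s-r-a\rho v_s$ from \eqref{cond}, the $(\mu_s-r)\,ds$ terms cancel and the $ds$-integrand collapses to $-\tfrac12(\rho-a)^2 v_s$, leaving exactly $S_0\,\EE\bigl[\exp\bigl((\rho-a)\int_0^T\sqrt{v_s}\,dW_s-\tfrac12(\rho-a)^2\int_0^T v_s\,ds\bigr)\bigr]$. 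Novikov via Proposition \ref{novikov} then needs only $\tfrac12(\rho-a)^2<c_l$, which $(\rho-a)^2\le 2(\rho^2+a^2)$ together with $|a|<\sqrt{c_l-\rho^2}$ gives. Since you never invoke Cauchy--Schwarz, the constraint $|a|<\sqrt{2c_l}/2$ is not an auxiliary requirement of your proof at all, contrary to the hedge in your final paragraph; your argument in fact proves the stronger statement that $\bigl\{\QQ(a)\in\mathcal{E}:|a|<\sqrt{c_l-\rho^2}\bigr\}$ is a set of equivalent martingale measures, of which $\mathcal{E}_m$ is a subset (a proper one whenever $\rho^2<c_l/2$).
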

\begin{proof}
Let $\QQ(a)\in\mathcal{E}_m\subset\mathcal{E}$, by Theorem \ref{risk} $\widehat{S}$ is a $(\mathcal{F},\QQ(a))$-local martingale with the following dynamics 
\begin{align*}
    \frac{d\widehat{S}_t}{\widehat{S}_t}=\sqrt{v_t}\left(\sqrt{1-\rho^2}dB_t^{\QQ(a)}+\rho dW_t^{\QQ(a)}\right).
\end{align*}
Hence, 
\begin{align*}
    \widehat{S}_t=S_0\exp\left(\sqrt{1-\rho^2}\int_0^t\sqrt{v_s}dB_s^{\QQ(a)}+\rho\int_0^t\sqrt{v_s}dW_s^{\QQ(a)}-\frac{1}{2}\int_0^tv_sds\right).
\end{align*}
Since $\widehat{S}$ is a positive $\left(\mathcal{F},\QQ(a)\right)$-local martingale with $\widehat{S}_0=S_0$, it is a $\left(\mathcal{F},\QQ(a)\right)$-supermartingale and it is a $\left(\mathcal{F},\QQ(a)\right)$-martingale if and only if 
$\EE^{\QQ(a)}[\widehat{S}_T]=S_0.$ 

Similarly as we did in Theorem \ref{risk}, we define the processes
\begin{align*}
    Y_t^{\QQ(a)}:&=\exp\left(\sqrt{1-\rho^2}\int_0^t\sqrt{v_s}dB_s^{\QQ(a)}-\frac{1-\rho^2}{2}\int_0^tv_sds\right) \\
    Z_t^{\QQ(a)}:&=\exp\left(\rho\int_0^t\sqrt{v_s}dW_s^{\QQ(a)}-\frac{\rho^2}{2}\int_0^tv_sds\right).
    \end{align*}
Thus, $\widehat{S}_t=S_0Y_t^{\QQ(a)} Z_t^{\QQ(a)}$. Using that $Z_T^{\QQ(a)}$ is $\mathcal{F}_T^{W^{\QQ(a)}}\vee\mathcal{F}_T^L$-measurable we have
\begin{align}\label{cond.1}
    \EE^{\QQ(a)}[\widehat{S}_T] & =S_0\EE^{\QQ(a)}\left[Y_T^{\QQ(a)} Z_T^{\QQ(a)}\right]=S_0\EE^ {\QQ(a)}\left[\EE^{\QQ(a)}\left[Y_T^{\QQ(a)} Z_T^{\QQ(a)}|\mathcal{F}_T^{W^{\QQ(a)}}\vee\mathcal{F}_T^L\right]\right] \notag \\
    & = S_0\EE^{\QQ(a)}\left[Z_T^{\QQ(a)}\EE^{\QQ(a)}\left[Y_T^{\QQ(a)}|\mathcal{F}_T^{W^{\QQ(a)}}\vee\mathcal{F}_T^L\right]\right].
\end{align}
Since $\int_0^Tv_udu<\infty, {\QQ(a)}$-a.s. and $v$ is $\{\mathcal{F}_t^{W^{\QQ(a)}}\vee\mathcal{F}_t^L\}_{t\in[0,T]}$-adapted,
\begin{align*}
    Y_T^{\QQ(a)}|\mathcal{F}_T^{W^{\QQ(a)}}\vee\mathcal{F}_T^L\sim\text{Lognormal}\left(-\frac{1-\rho^2}{2}\int_0^Tv_sds,(1-\rho^2)\int_0^Tv_sds\right)
\end{align*}
and we have that $\EE^{\QQ(a)}[Y_T^{\QQ(a)}|\mathcal{F}_T^{W^{\QQ(a)}}\vee\mathcal{F}_T^L]=1.$ Replacing the last expression in \eqref{cond.1} we obtain that 
$\EE^{\QQ(a)}[\widehat{S}_T]=S_0\EE^{\QQ(a)}[Z_T^{\QQ(a)}],$ and, hence, we only need to check $\EE^{\QQ(a)}[Z_T^{\QQ(a)}]=1$. We will prove that Novikov's condition holds, that is, 
\begin{align}\label{condQ}
    \EE^{\QQ(a)}\left[\exp\left(\frac{\rho^2}{2}\int_0^Tv_udu\right)\right]<\infty.
\end{align}
Differently from the proof of \cite[Theorem 3.6]{onchanges}, we can not directly apply Proposition \ref{novikov} with the volatility parameters $\kappa^{(a)}$ and $\Bar{v}^{(a)}$ given in \eqref{underQ} to prove that the previous expectation is finite. One reason is that under ${\QQ(a)}$ the Brownian motion $W^{\QQ(a)}$ and the compound Hawkes process $L$ are not longer independent because 
\begin{align*}
    dW_t^{\QQ(a)}=dW_t+a\sqrt{v_t}dt,
\end{align*}
and, therefore, the dynamics of $v$ is different under ${\QQ(a)}$. In order to check $(\ref{condQ})$ we note that
\begin{align*}
    \EE^{\QQ(a)}\left[\exp\left(\frac{\rho^2}{2}\int_0^Tv_udu\right)\right]=\EE\left[\exp\left(\frac{\rho^2}{2}\int_0^Tv_udu\right)\frac{d\QQ(a)}{d\PP}\right],
\end{align*}
where 
\begin{align*}
    \frac{d\QQ(a)}{d\PP}=Y_T^{(a)}Z_T^{(a)},
\end{align*}
and $Y_T^{(a)}$ and $Z_T^{(a)}$ are given in the statement of Theorem \ref{risk}. 

Repeating the same argument as in Theorem \ref{risk}, we can write
\begin{align*}
    \EE^{\QQ(a)}\left[\exp\left(\frac{\rho^2}{2}\int_0^Tv_udu\right)\right] &=\EE\left[\exp\left(\frac{\rho^2}{2}\int_0^Tv_udu\right)Y_T^{(a)}Z_T^{(a)}\right] \\
    & = \EE\left[\EE\left[ \exp\left(\frac{\rho^2}{2}\int_0^Tv_udu\right)Y_T^{(a)}Z_T^{(a)}\Big|\mathcal{F}_T^W\vee\mathcal{F}_T^L\right]\right] \\
    & = \EE\left[\exp\left(\frac{\rho^2}{2}\int_0^Tv_udu\right)Z_T^{(a)}\EE\left[Y_T^{(a)}|\mathcal{F}_T^W\vee\mathcal{F}_T^L\right]\right] \\
    & = \EE\left[\exp\left(\frac{\rho^2}{2}\int_0^Tv_udu\right)Z_T^{(a)}\right] \\
    & = \EE\left[\exp\left(-a\int_0^T\sqrt{v_s}dW_s-\frac{1}{2}(a^2-\rho^2)\int_0^Tv_sds\right)\right],
\end{align*}
Then, adding and subtracting $a^2\int_0^Tv_sds$ in the exponential and applying Cauchy-Schwarz inequality, we obtain
\begin{align}\label{emm}
     &\EE\left[\exp\left(-a\int_0^T\sqrt{v_s}dW_s-\frac{1}{2}(a^2-\rho^2)\int_0^Tv_sds\right)\right]  = \notag \\
     &=\EE\left[\exp\left(-a\int_0^T\sqrt{v_s}dW_s-a^2\int_0^Tv_sds\right)\exp\left(\frac{1}{2}(a^2+\rho^2)\int_0^tv_sds\right)\right] \notag\\
     &\leq \EE\left[\exp\left(-2a\int_0^T\sqrt{v_s}dW_s-2a^2\int_0^Tv_sds\right)\right]^{\frac{1}{2}}\EE\left[\exp\left((a^2+\rho^2)\int_0^tv_sds\right)\right]^{\frac{1}{2}}.
\end{align}
Note that the first factor in \eqref{emm} is the expectation of a Doléans-Dade exponential. Since $|a|<\frac{\sqrt{2c_l}}{2}$ (recall the choice of $a$ in \eqref{choicea}), $2a^2<c_l$ and by Proposition \ref{novikov}, Novikov's condition holds 
\begin{align*}
\EE\left[\exp\left(2a^2\int_0^Tv_sds\right)\right]<\infty.
\end{align*}
Therefore, we have that
\begin{align*}
     \EE\left[\exp\left(-2a\int_0^T\sqrt{v_s}dW_s-2a^2\int_0^Tv_sds\right)\right]=1.
\end{align*}
For the second term in \eqref{emm} we apply again Proposition \ref{novikov}. We need $  a^2+\rho^2<c_l,$ which is true because $|a|<\sqrt{c_l-\rho^2}$. Thus
\begin{align*}
\EE\left[\exp\left((a^2+\rho^2)\int_0^tv_sds\right)\right]<\infty,
\end{align*}
and 
\begin{align*}
     \EE^{\QQ(a)}\left[\exp\left(\frac{\rho^2}{2}\int_0^Tv_udu\right)\right]<\infty.
\end{align*}
We conclude that $\EE^{\QQ(a)}[Z_T^{\QQ(a)}]=1$, $\EE^{\QQ(a)}[\widehat{S}_T]=S_0$ and $\widehat{S}$ is a $(\mathcal{F},{\QQ(a)})$-martingale. Therefore, $\mathcal{E}_m$ is a set of equivalent martingale measures.
\end{proof}


\appendix
\section{Appendix: Technical lemmas}\label{sec: appendix}
We now give the proofs that were postponed in Section 3.

\begin{lem}\label{Alode2} For $c\leq\frac{\kappa^2}{2\sigma^2}$, define $D(c):=\sqrt{\kappa^2-2\sigma^2c}$, $\Lambda(c):=\frac{2\eta c\left(e^{D(c)T}-1\right)}{D(c)-\kappa+\left(D(c)+\kappa\right)e^{D(c)T}}$
and
\begin{align*}
    c_l:=\sup\left\{c\leq\frac{\kappa^2}{2\sigma^2}: \Lambda(c)< \epsilon_J \hspace{0.3cm}\text{and}\hspace{0.3cm} M_J\left(\Lambda(c)\right)\leq\frac{\beta}{\alpha}\exp\left(\frac{\alpha}{\beta}-1\right) \right\}.
\end{align*}
Then, $0<c_l\leq\frac{\kappa^2}{2\sigma^2}$ and for $c< c_l$, 
\begin{enumerate}[(i)]
    \item The ODE
\begin{align}\label{Aode1l}
    G'(t) & =-\frac{1}{2}\sigma^2G^2(t)+\kappa G(t)-c  \\
    G(T) & = 0 \notag
\end{align}
has a unique solution in the interval $[0,T]$. The solution is strictly decreasing and it is given by 
\begin{align*}
    G(t)=\frac{2c\left(e^{D(c)(T-t)}-1\right)}{D(c)-\kappa+\left(D(c)+\kappa\right)e^{D(c)(T-t)}}.
\end{align*}
    \item The function $t\mapsto M_J(\eta G(t))$ is well defined for $t\in[0,T]$.
    \item Define $U:=\sup_{t\in [0,T]}M_J(\eta G(t))$. Then, $U=M_J(\eta G(0))$ and 
\begin{align*}
    1<U\leq \frac{\beta}{\alpha}\exp\left(\frac{\alpha}{\beta}-1\right).
\end{align*} 
\item The ODE
\begin{align}\label{Aode2l}
    H'(t) & =\beta H(t)-M_J\left(\eta G(t)\right)\exp\left(\alpha H(t)\right) +1 \\
    H(T) & = 0 \notag
\end{align}
has a unique solution in $[0,T]$.
\end{enumerate} 
\end{lem}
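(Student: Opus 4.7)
The plan is to tackle the four parts in the order stated, with parts (i)--(iii) being essentially explicit calculations and part (iv) carrying the real analytic content. I would first verify $c_l > 0$: at $c = 0$ the definition gives $\Lambda(0) = 0$ and $M_J(0) = 1$, and the threshold satisfies $\frac{\beta}{\alpha}\exp(\frac{\alpha}{\beta}-1) > 1$ because $\alpha/\beta \in (0,1)$ and the elementary inequality $e^{x-1} > x$ holds on $(0,1)$. Continuity of $\Lambda$ and $M_J$ at $0$ then yields a neighbourhood of $0$ on which both defining conditions hold strictly, so $c_l > 0$; the upper bound $c_l \leq \kappa^2/(2\sigma^2)$ is built into the definition.

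For part (i), the ODE is a constant-coefficient Riccati equation. Since $c \leq \kappa^2/(2\sigma^2)$, the quadratic $-\frac{\sigma^2}{2}x^2 + \kappa x - c$ factors as $-\frac{\sigma^2}{2}(x - x_-)(x - x_+)$ with real roots $x_\pm = (\kappa \pm D(c))/\sigma^2$, both strictly positive when $c > 0$. Separating variables and integrating from $t$ to $T$ by partial fractions yields the stated closed form; strict monotonicity follows from $G'(t) = -\frac{\sigma^2}{2}(G(t) - x_-)(G(t) - x_+) < 0$ on the invariant interval $(0, x_-)$, in which the trajectory lies because $G(T) = 0$ sits strictly between the two equilibria. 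For part (ii), monotonicity of $G$ together with $G(T) = 0$ gives $\eta G(t) \in [0, \eta G(0)] = [0, \Lambda(c)]$, which lies strictly inside $(-\infty, \epsilon_J)$ by the $\Lambda(c) < \epsilon_J$ clause in the definition of $c_l$. Part (iii) is then immediate: $M_J$ is strictly increasing on $(-\infty, \epsilon_J)$ since $M_J'(t) = \EE[J_1 e^{tJ_1}] > 0$ (using $J_1 > 0$), so $U = M_J(\eta G(0)) = M_J(\Lambda(c))$; the upper bound on $U$ is the defining inequality of $c_l$, and $U > 1 = M_J(0)$ because $\eta G(0) > 0$.

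Part (iv) is the main obstacle. I would reverse time by setting $\tilde H(s) := H(T-s)$, giving $\tilde H(0) = 0$ and
\[
\tilde H'(s) = -\beta \tilde H(s) + M_J(\eta G(T-s))\exp(\alpha \tilde H(s)) - 1.
\]
The right-hand side is continuous in $s$ and locally Lipschitz in $\tilde H$, so Picard--Lindel\"of provides a unique local solution; extending it to all of $[0,T]$ requires an a priori bound that neutralises the $\exp(\alpha \tilde H)$ term. The key idea is to build a barrier via the auxiliary function $\phi(x) := U e^{\alpha x} - \beta x - 1$. Its unique critical point is $x^* = \alpha^{-1}\ln(\beta/(U\alpha))$, which is finite and positive because $U < \beta/\alpha$ (a consequence of $\exp(\alpha/\beta - 1) < 1$), with value $\phi(x^*) = \frac{\beta}{\alpha}\bigl(1 - \ln(\beta/(U\alpha))\bigr) - 1$. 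An elementary rearrangement shows $\phi(x^*) \leq 0$ is equivalent to $U \leq \frac{\beta}{\alpha}\exp(\frac{\alpha}{\beta}-1)$, so under the defining inequality of $c_l$ the function $\phi$ has a root $M_1 \in (0, x^*]$ with $\phi \leq 0$ on $[M_1, x^*]$. Since $M_J(\eta G(T-s)) \leq U$ for all $s$, at $\tilde H = M_1$ one gets $\tilde H'(s) \leq \phi(M_1) = 0$, preventing an upward crossing of $M_1$; and at $\tilde H = 0$ with $s > 0$, the strict positivity $G(T-s) > 0$ forces $\tilde H'(s) > 0$, preventing a drop below $0$ (the degenerate boundary behaviour at $s = 0$, where $\tilde H$ and $\tilde H'$ both vanish, is handled by a short Taylor expansion using $G'(T) = -c$).

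Thus $\tilde H$ is trapped in $[0, M_1]$, the exponential stays bounded, and the local solution extends uniquely to all of $[0,T]$. The genuinely hard step is recognising that the algebraic inequality defining $c_l$ is precisely the threshold for $\phi$ to have a nonempty zero sublevel set, thereby producing the barrier that prevents blow-up; the remaining work is standard ODE continuation.
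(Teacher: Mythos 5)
Your proposal is correct, and parts (i)--(iii) and the verification of $c_l>0$ match the paper's argument essentially line by line. In part (iv) you take a mildly different route: the paper constructs explicit sub- and super-solutions $h_m, h_M$ for the time-reversed ODE, proves $h_m\leq h\leq h_M$ by a Gronwall comparison, and then invokes the continuation theorem of Hartman on the bounded set $(-\frac{1}{\beta}-1,\,x_p+1)$, whereas you argue directly via an invariant region with barriers at $0$ and at a zero $M_1$ of $\phi(x)=Ue^{\alpha x}-\beta x-1$. Both arguments turn on exactly the same key computation, namely that the minimum of $\phi$ at $x^*=\alpha^{-1}\ln\bigl(\beta/(\alpha U)\bigr)$ is nonpositive precisely when $U\leq\frac{\beta}{\alpha}\exp\bigl(\frac{\alpha}{\beta}-1\bigr)$, which is the defining inequality for $c_l$; the difference is only in how the resulting a priori bound is propagated. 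Your version is slightly more self-contained (no Gronwall, no explicit comparison ODEs) and yields the tighter lower barrier $0$ rather than $-1/\beta$, at the modest cost of having to treat the degenerate tangency at $s=0$ (where $\tilde H(0)=\tilde H'(0)=0$) via the second-order Taylor term $\tilde H''(0)=\eta c\,\EE[J_1]>0$ -- a point the paper avoids entirely by choosing a strictly negative lower barrier. One small remark: at the upper barrier you should note that $M_J(\eta G(T-s))<U$ strictly for $s\in(0,T)$ (since $G$ is strictly decreasing and $M_J$ strictly increasing), so that $\tilde H'<0$ whenever $\tilde H$ touches $M_1$ in the open interval, which rules out crossings cleanly without worrying about the weak inequality at the endpoint $s=T$.
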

\begin{proof}
We first check that $c_l>0$. Since
\begin{align}\label{limit}
    \lim_{c\rightarrow0^+}\Lambda(c)=0,
\end{align}
there exist positive values of $c$ satisfying the inequality $\Lambda(c)<\epsilon_J$. Using that $\alpha<\beta$, one can check that $\frac{\beta}{\alpha}\exp\left(\frac{\alpha}{\beta}-1\right)>1$. Since the limit in \eqref{limit} holds, $M_J(0)=1$ and $M_J$ is a continuous function there exist positive values of $c$ satisfying the inequality 
$ M_J\left(\Lambda(c)\right)\leq\frac{\beta}{\alpha}\exp\left(\frac{\alpha}{\beta}-1\right).$ Therefore, $c_l>0$. From now on, let $c< c_l$. 

(i) To find the solution we can transform equation \eqref{Aode1l} to a second order linear equation with constant coefficients and then apply the standard method to solve it using that $c<\frac{\kappa^2}{2\sigma^2}$. To see that $G$ is strictly decreasing one can check that
\begin{align*}
    G'(t)=\frac{-4cD(c)^2e^{D(c)(T-t)}}{\left(D(c)-\kappa+\left(D(c)+\kappa\right)e^{D(c)(T-t)}\right)^2}<0.
\end{align*}

(ii) Since $G$ is strictly decreasing and $\eta>0$
\begin{align*}
    \sup_{t\in[0,T]}\eta G(t)=\eta G(0)=\frac{2\eta c\left(e^{D(c)T}-1\right)}{D(c)-\kappa+(D(c)+\kappa)e^{D(c)T}}=\Lambda(c).
\end{align*}
By definition of $c_l$ we have $\Lambda(c)<\epsilon_J$. Then, $\eta G(t)<\epsilon_J$ for $t\in[0,T]$ and $M_J(\eta G(t))$ is well defined for $t\in[0,T]$. 

(iii) Since $G$ is strictly decreasing and $M_J$ is strictly increasing we have $M_J(\eta G(0))\geq M_J(\eta G(t))$ for all $t\in[0,T]$. Therefore, 
$U=M_J(\eta G(0)).$ Since $\eta G(0)>\eta G(T)=0$, we have that
\begin{align*}
    U=M_J(\eta G(0))>M_J(\eta G(T))=M_J(0)=1.
\end{align*}
Moreover, by definition of $c_l$ we have
\begin{align*}
    U=M_J(\eta G(0))=M_J\left(\frac{2\eta c(e^{D(c)T}-1)}{D(c)-\kappa+(D(c)+\kappa)e^{D(c)T}}\right)=M_J(\Lambda(c))\leq\frac{\beta}{\alpha}\exp\left(\frac{\alpha}{\beta}-1\right).
\end{align*}

(iv) Let us make the change of variables $h(t):=H(T-t)$. Then, the ODE in \eqref{Aode2l} is transformed to 
\begin{align}\label{Aodeh}
    h'(t) & = f(t,h(t))= M_J\left(\eta G(T-t)\right)\exp\left(\alpha h(t)\right)-\beta h(t)-1 \\
    h(0) & = 0 \notag
\end{align}
where $f(t,x):=M_J(\eta G(T-t))\exp\left(\alpha x\right)-\beta x-1$. 
Note that for $t\in[0,T]$ and $x\in\R$
\begin{align}\label{Aboundedh}
    f_m(x):=-\beta x-1 \leq f(t,x)\leq U\exp\left(\alpha x\right)-\beta x-1=:f_M(x).
\end{align}
First, we focus on the ODE
\begin{align}\label{Aodea}
    h_M'(t) & = f_M(h_M(t))= U\exp\left(\alpha h_M(t)\right)-\beta h_M(t)-1 \\
    h_M(0) & = 0 \notag
\end{align}
Since $f_M$ is continuously differentiable in $\RR$, it is Lipschitz continuous on bounded intervals and there exists a unique local solution for every initial condition, see \cite[Chapter II, Theorem 1.1]{hartman}. We want $f_M(0)>0$ and the existence of $x_p>0$ such that $f_M(x_p)\leq0$, that would imply the existence of a stable equilibrium point in the interval $(0,x_p)$. Therefore, the solution of \eqref{Aodea} would be well defined on $[0,\infty)$ and $h_M(t)<x_p$ for all $t\in[0,\infty)$.

Note that $f_M(0)=U-1>0$ as it was seen in the previous part.

The minimum of $f_M$ is achieved at $x_\text{min}=\frac{1}{\alpha}\ln\left(\frac{\beta}{\alpha U}\right)$. Note that $x_\text{min}>0$ if and only if $U<\frac{\beta}{\alpha}$. Then, 
\begin{align*}
    f_M(x_\text{min})=\frac{\beta}{\alpha}\left(1-\ln\left(\frac{\beta}{\alpha U}\right)\right)-1 \leq 0 
\end{align*}
if, and only if $U\leq\frac{\beta}{\alpha}\exp\left(\frac{\alpha}{\beta}-1\right),$ which is fulfilled by (iii). Moreover, note that
$\frac{\beta}{\alpha}\exp\left(\frac{\alpha}{\beta}-1\right)<\frac{\beta}{\alpha}$ and then $x_\text{min}>0$.

We conclude that the point we were searching is $x_p=x_\text{min}$. This guarantees that $h_M$ is well defined on $[0,\infty)$ and $h_M(t)<x_p$ for all $t\in[0,T]$. 

Now we focus on the ODE
\begin{align}\label{Aodeb}
    h_m'(t) & = f_m(h_m(t))= -\beta h_m(t)-1 \\
    h_m(0) & = 0 \notag
\end{align}
The solution is given by $ h_m(t)=\frac{e^{-\beta t}-1}{\beta}.$ The function $h_m$ is well defined on $[0,\infty)$ and $h_m(t)\geq\frac{-1}{\beta}$ for all $t\in[0,\infty)$. 

Consider again to the ODE in \eqref{Aodeh} and recall that 
\begin{align*}
    f(t,x)=M_J\left(\eta G(T-t)\right)\exp\left(\alpha x)\right)-\beta x-1.
\end{align*}
Define the open interval $V:=(-\frac{1}{\beta}-1,x_p+1)$. Note that $f\colon[0,T] \times V\to\R$ is a continuous function, Lipschitz in $x$ because the exponential function is Lipschitz on bounded intervals. For $(t,x_1),(t,x_2)\in[0,T]\times V$ we have
\begin{align*}
    |f(t,x_1)-f(t,x_2)|&\leq |M_J(\eta G(T-t))||\exp\left(\alpha x_1\right)-\exp\left(\alpha x_2\right)|+\beta|x_1-x_2| \\
    & \leq U|\exp\left(\alpha x_1\right)-\exp\left(\alpha x_2\right)|+\beta|x_1-x_2| \\
    &\leq K_1|x_1-x_2|,
\end{align*}
for some constant $K_1>0$. Then, by the Picard-Lindelöf theorem (see \cite[Chapter II, Theorem 1.1]{hartman}) there is a unique solution $h\colon I\to V$ for some interval $I\subset[0,T]$. Moreover, by \cite[Chapter II, Theorem 3.1]{hartman} only two cases are possible:

1) $I=[0,T]$. In that case, there is nothing more to prove.

2) $I=[0,\epsilon)$ with $\epsilon\leq T$ and
\begin{align}\label{Aboundary}
    \lim_{t\rightarrow\epsilon^-}h(t)\in\left\{-\frac{1}{\beta}-1,x_p+1\right\}.
\end{align}
That is, $h$ approaches the boundary of $V$ when $t$ approaches $\epsilon$. 
However, as a consequence of \eqref{Aboundedh}, we will prove that $h_m(t)\leq h(t)\leq h_M(t),$ for all $t\in[0,\epsilon)$.

First, we prove that $h_m(t)\leq h(t)$ for all $t\in[0,\epsilon)$. Define the function $g(t):=h_m(t)-h(t)$ for $t\in[0,\epsilon)$. Note that $g(0)=0$ and we want to prove that $g(t)\leq 0$ for all $t\in[0,\epsilon)$.
Assume there exists $s\in(0,\epsilon)$ such that $g(s)>0$. Since $g$ is continuous and $g(0)=0$, there exists $r\in[0,s)$ with $g(r)=0$ and $g(t)>0$ for $t\in(r,s]$. Now, for $t\in[r,s]$ we have
\begin{align*}
    g'(t)&=h_m'(t)-h'(t) \\
    &=f_m(h_m(t))-f(t,h(t)) \\
    &\leq f(t,h_m(t))-f(t,h(t)) \\
    &= M_J(\eta G(T-t))\left(\exp\left(\alpha h_m(t)\right)-\exp\left(\alpha h(t)\right)\right)-\beta\left(h_m(t)-h(t)\right)  \\
    &\leq U\left(\exp\left(\alpha h_m(t)\right)-\exp\left(\alpha h(t)\right)\right)-\beta\left(h_m(t)-h(t)\right)  \\
    &\leq K_2 |h_m(t)-h(t)| = K_2 |g(t)| = K_2 g(t),
 \end{align*}
for some constant $K_2>0$, where in the last inequality we have used that since $h_m$ and $h$ are continuous, they are bounded in $[r,s]$ and we can use the Lipschitz property because the exponential function is Lipschitz on bounded intervals. Applying Gronwall's inequality we have that $g(s)\leq g(r)e^{K_2(s-r)}=0,$ which is a contradiction. We conclude that $h_m(t)\leq h(t)$ for all $t\in[0,\epsilon)$. A similar same argument can be employed to prove that $h(t)\leq h_M(t)$ for all $t\in[0,\epsilon)$.

For $t\in[0,\epsilon)$ we have 
\begin{align*}
    h_m(t)\leq h(t)\leq h_M(t) \implies \frac{-1}{\beta}\leq h(t)\leq x_p \implies \frac{-1}{\beta}\leq\lim_{t\rightarrow\epsilon^-}h(t)\leq x_p.
\end{align*}
This contradicts \eqref{Aboundary} and we conclude that the only possible situation is that $h$ is well defined on $[0,T]$. 
\end{proof}

\begin{cor} Define $c_s$ by 
\label{Aexplicit}
\begin{align}\label{defcs}
    c_s:=\min\left\{\frac{\kappa\epsilon_J}{2\eta},\frac{\kappa }{2\eta}M_J^{-1}\left(\frac{\beta}{\alpha}\exp\left(\frac{\alpha}{\beta}-1\right)\right),\frac{\kappa^2}{2\sigma^2}\right\}.
\end{align}
Then, $0<c_s<c_l$.
\end{cor}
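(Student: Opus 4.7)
The plan is to prove $0 < c_s$ first and then $c_s < c_l$, the key ingredient of the latter being a simple explicit upper bound on $\Lambda(c)$.

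\emph{Positivity of $c_s$.} Each of the three minimands in the definition is positive. The first and third are obvious; for the middle term I use that $\alpha/\beta \in (0,1)$, together with the elementary inequality $e^{y} > 1+y$ for $y \neq 0$ applied at $y = \alpha/\beta - 1 \neq 0$, to obtain $\frac{\beta}{\alpha}\exp(\alpha/\beta - 1) > 1 = M_J(0)$. Since $M_J$ is continuous and strictly increasing on $(-\infty,\epsilon_J)$, the inverse $M_J^{-1}\bigl(\frac{\beta}{\alpha}e^{\alpha/\beta-1}\bigr)$ is well-defined and strictly positive.

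\emph{The key bound.} I claim that for every $c \in (0,\kappa^2/(2\sigma^2)]$,
\begin{equation*}
\Lambda(c) \;<\; \frac{2\eta c}{\kappa}.
\end{equation*}
Multiplying numerator and denominator of $\Lambda(c)$ by $e^{-D(c)T/2}$ yields the more convenient form
\begin{equation*}
\Lambda(c) \;=\; \frac{2\eta c\,\tanh(D(c)T/2)}{D(c) + \kappa\tanh(D(c)T/2)},
\end{equation*}
with a removable singularity at $D(c)=0$. The claimed inequality is equivalent to $D(c) \geq 0$, with strict inequality whenever $D(c)>0$; at the boundary $c=\kappa^2/(2\sigma^2)$ (where $D(c)=0$) the limit gives $\Lambda(c) = \eta c T/(1+\kappa T/2) < 2\eta c/\kappa$. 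So the strict bound holds on the whole interval.

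\emph{Deducing $c_s < c_l$.} Combining Step 2 with the three defining inequalities $c_s \leq \kappa\epsilon_J/(2\eta)$, $c_s \leq (\kappa/(2\eta))M_J^{-1}(y)$ (where $y := \frac{\beta}{\alpha}\exp(\alpha/\beta - 1)$) and $c_s \leq \kappa^2/(2\sigma^2)$, together with the strict monotonicity of $M_J$, yields
\begin{equation*}
\Lambda(c_s) \;<\; \frac{2\eta c_s}{\kappa} \;\leq\; \epsilon_J, \qquad M_J(\Lambda(c_s)) \;<\; M_J\!\Bigl(\frac{2\eta c_s}{\kappa}\Bigr) \;\leq\; y,
\end{equation*}
so $c_s$ lies in the set whose supremum defines $c_l$ and satisfies both defining inequalities strictly. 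Continuity of $c \mapsto \Lambda(c)$ and of $M_J$ then produces a $\delta > 0$ such that these strict inequalities persist on $[c_s, c_s+\delta]$; shrinking $\delta$ so that $c_s + \delta \leq \kappa^2/(2\sigma^2)$ yields $c_l \geq c_s + \delta > c_s$.

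The main obstacle is the degenerate case $c_s = \kappa^2/(2\sigma^2)$, in which one cannot push to the right within $[0,\kappa^2/(2\sigma^2)]$. One way to handle this is to observe that $\Lambda$ admits a real-analytic continuation across the threshold via the substitution $D \to i\tilde D$ (replacing $\tanh$ by $\tan$), valid on a small interval above $\kappa^2/(2\sigma^2)$ where $\tan$ stays finite; the sup defining $c_l$ can then be taken over this enlarged interval and the continuity argument applies verbatim. In the generic case where at least one of the first two minimands strictly dominates $\kappa^2/(2\sigma^2)$ in the min, no such workaround is needed.
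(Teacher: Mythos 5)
Your argument follows the paper's own proof quite closely: the key estimate is the uniform bound $\Lambda(c) < 2\eta c/\kappa$ (the paper verifies it by clearing denominators, you via the equivalent $\tanh$ form, which has the small advantage of making the $D(c)\to 0$ limit transparent rather than leaving it as a $0/0$ indeterminacy in the algebraic chain), after which one plugs in $c_s$ and uses the strict monotonicity of $M_J$ to check both membership conditions with room to spare. Up to this point the two proofs are essentially identical.

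Where you depart from the paper is in spelling out the continuity argument needed to upgrade $c_s \leq c_l$ to $c_s < c_l$, and in noticing that this argument jams when $c_s = \kappa^2/(2\sigma^2)$, the hard cap in the supremum defining $c_l$. Your unease here is warranted: the paper's one-line justification (that the inequality \eqref{Atrick} is strict) does not address the endpoint, and the degenerate case does occur --- take $\eta$ small enough that the first two minimands in \eqref{defcs} both exceed $\kappa^2/(2\sigma^2)$, so $c_s = \kappa^2/(2\sigma^2)$; then $c_s$ lies in the set defining $c_l$, so $c_l = \kappa^2/(2\sigma^2) = c_s$. Your proposed remedy, however, does not rescue the statement as written: analytically continuing $\Lambda$ across $\kappa^2/(2\sigma^2)$ via $D \mapsto i\tilde D$ and enlarging the supremum's domain amounts to redefining $c_l$, whereas $c_l$ in Lemma \ref{Alode2} is explicitly the supremum over $c \leq \kappa^2/(2\sigma^2)$. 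With that definition the degenerate case forces $c_l = c_s$, so the strict inequality asserted in the corollary is simply not available; only $c_s \leq c_l$ holds, which, as it happens, is all that the remark following the corollary (applying Lemma \ref{lode2} to any $c < c_s$) actually requires.
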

\begin{proof}
We first check that $c_s>0$. The function $M_J\colon(-\infty,\epsilon_J)\to(0,\infty)$ is well defined and strictly increasing. Therefore, $M_J^{-1}\colon(0,\infty)\to(-\infty,\epsilon_J)$ is also a well defined function and it is strictly increasing. 

Since $\frac{\beta}{\alpha}\exp\left(\frac{\alpha}{\beta}-1\right)>1$ and $M_J(0)=1$ we have $M_J^{-1}\left(\frac{\beta}{\alpha}\exp\left(\frac{\alpha}{\beta}-1\right)\right)>0,$ and we can conclude that $c_s>0$. 

Recall that $0<
c_l\leq\frac{\kappa^2}{2\sigma^2}$ and its definition 
\begin{align}\label{defcl}
    c_l=\sup\left\{c\leq\frac{\kappa^2}{2\sigma^2}: \Lambda(c)< \epsilon_J \hspace{0.3cm}\text{and}\hspace{0.3cm} M_J\left(\Lambda(c)\right)\leq\frac{\beta}{\alpha}\exp\left(\frac{\alpha}{\beta}-1\right) \right\},
\end{align}
where $D(c)=\sqrt{\kappa^2-2\sigma^2c}$ and $\Lambda(c)=\frac{2\eta c\left(e^{D(c)T}-1\right)}{D(c)-\kappa+\left(D(c)+\kappa\right)e^{D(c)T}}$. 

To prove that $c_s<c_l$ we check that $\Lambda(c_s)<\epsilon_J$ and $M_J\left(\Lambda(c_s)\right)\leq\frac{\beta}{\alpha}\exp\left(\frac{\alpha}{\beta}-1\right)$. The following inequality holds
\begin{align}\label{Atrick}
     \Lambda(c)=\frac{2\eta c\left(e^{\sqrt{\kappa^2-2\sigma^2c}T}-1\right)}{\sqrt{\kappa^2-2\sigma^2c}-\kappa+\left(\sqrt{\kappa^2-2\sigma^2c}+\kappa\right)e^{\sqrt{\kappa^2-2\sigma^2c}T}}< \frac{2\eta c}{\kappa}.
\end{align}
Actually
\begin{align*}
\Lambda(c)=\frac{2\eta c\left(e^{\sqrt{\kappa^2-2\sigma^2c}T}-1\right)}{\sqrt{\kappa^2-2\sigma^2c}-\kappa+\left(\sqrt{\kappa^2-2\sigma^2c}+\kappa\right)e^{\sqrt{\kappa^2-2\sigma^2c}T}}< \frac{2\eta c}{\kappa} \\
\iff 
    \kappa\left(e^{\sqrt{\kappa^2-2\sigma^2c}T}-1\right)<\sqrt{\kappa^2-2\sigma^2c}-\kappa+\left(\sqrt{\kappa^2-2\sigma^2c}+\kappa\right)e^{\sqrt{\kappa^2-2\sigma^2c}T} \\
    \iff 0< \sqrt{\kappa^2-2\sigma^2c}\left(1+e^{\sqrt{\kappa^2-2\sigma^2c}T}\right).
\end{align*}
Now, by definition of $c_s$ we have $ \Lambda(c_s)<\frac{2\eta c_s}{\kappa}\leq\epsilon_J,$ and
\begin{align*}
    M_J\left(\Lambda(c_s)\right)<M_J\left(\frac{2\eta c_s}{\kappa}\right)\leq M_J\left(M_J^{-1}\left(\frac{\beta}{\alpha}\exp\left(\frac{\alpha}{\beta}-1\right)\right)\right)=\frac{\beta}{\alpha}\exp\left(\frac{\alpha}{\beta}-1\right).
\end{align*}
We conclude that $c_s<c_l$. Note that the inequality in $c_s<c_l$ is strict because the inequality in \eqref{Atrick} is strict.
\end{proof}
\begin{exam}\label{exA}
Some examples:
\begin{enumerate}[(i)]
    \item If $J_1\sim\text{Exponential}(\lambda)$, then 
\begin{align*}
    c_s=\min\left\{\frac{\kappa\lambda}{2\eta}\left(1-\frac{\alpha}{\beta}\exp\left(1-\frac{\alpha}{\beta}\right)\right),\frac{\kappa^2}{2\sigma^2}\right\}.
\end{align*}
\item If $J_1\sim\text{Gamma}(\mu,\lambda)$ with $\mu,\lambda>0$ as the shape and the rate, respectively. Then
\begin{align*}
    c_s=\min\left\{\frac{\kappa\lambda }{2\eta}\left(1-\frac{1}{\left(\frac{\beta}{\alpha}\exp\left(\frac{\alpha}{\beta}-1\right)\right)^{1/\mu}}\right),\frac{\kappa^2}{2\sigma^2}\right\}.
\end{align*}
\item If $J_1=j>0$, then
\begin{align*}
    c_s=\min\left\{\frac{\kappa}{2\eta j}\left(\ln\left(\frac{\beta}{\alpha}\right)+\frac{\alpha}{\beta}-1\right),\frac{\kappa^2}{2\sigma^2}\right\}.
\end{align*}
\end{enumerate}
\end{exam}
\begin{proof}
(i) The moment generating function is given by $M_J(t)=\frac{\lambda}{\lambda-t}, t<\lambda.$ Hence, with the notation of Assumption \ref{as}, $\epsilon_J=\lambda$ and the inverse of the $M_J$ is given by $M_J^{-1}(t)=\lambda\left(1-\frac{1}{t}\right), t>0$. Then, applying Corollary \ref{Aexplicit} we have the following expression of $c_s$
\begin{align*}
    c_s=\min\left\{\frac{\kappa\lambda}{2\eta},\frac{\kappa\lambda }{2\eta}\left(1-\frac{1}{\frac{\beta}{\alpha}\exp\left(\frac{\alpha}{\beta}-1\right)}\right),\frac{\kappa^2}{2\sigma^2}\right\}.
\end{align*}
Note that since $\frac{\beta}{\alpha}\exp\left(\frac{\alpha}{\beta}-1\right)>1$, we have
\begin{align*}
    0<1-\frac{1}{\frac{\beta}{\alpha}\exp\left(\frac{\alpha}{\beta}-1\right)}<1.
\end{align*}
We conclude that 
\begin{align*}
    c_s&=\min\left\{\frac{\kappa\lambda}{2\eta},\frac{\kappa\lambda }{2\eta}\left(1-\frac{1}{\frac{\beta}{\alpha}\exp\left(\frac{\alpha}{\beta}-1\right)}\right),\frac{\kappa^2}{2\sigma^2}\right\} \\
    & =\min\left\{\frac{\kappa\lambda}{2\eta}\left(1-\frac{\alpha}{\beta}\exp\left(1-\frac{\alpha}{\beta}\right)\right),\frac{\kappa^2}{2\sigma^2}\right\}.
\end{align*}

\noindent
(ii) The moment generating function is given by $M_J(t)=\left(1-\frac{t}{\lambda}\right)^{-\mu}, t<\lambda.$ Thus, $\epsilon_J=\lambda$ and the inverse of $M_J$ is given by $M_J^{-1}(t)=\lambda\left(1-t^{-1/\mu}\right), t>0.$ Then, applying Corollary \ref{Aexplicit} we have the following expression of $c_s$
\begin{align*}
    c_s=\min\left\{\frac{\kappa\lambda}{2\eta},\frac{\kappa\lambda }{2\eta}\left(1-\left(\frac{\beta}{\alpha}\exp\left(\frac{\alpha}{\beta}-1\right)\right)^{-1/\mu}\right),\frac{\kappa^2}{2\sigma^2}\right\}.
\end{align*}
Note that since $\frac{\beta}{\alpha}\exp\left(\frac{\alpha}{\beta}-1\right)>1$, we have
\begin{align*}
    0<1-\frac{1}{\left(\frac{\beta}{\alpha}\exp\left(\frac{\alpha}{\beta}-1\right)\right)^{1/\mu}}<1.
\end{align*}
We conclude that 
\begin{align*}
    c_s=\min\left\{\frac{\kappa\lambda }{2\eta}\left(1-\left(\frac{\beta}{\alpha}\exp\left(\frac{\alpha}{\beta}-1\right)\right)^{-1/\mu}\right),\frac{\kappa^2}{2\sigma^2}\right\}.
\end{align*}

\noindent
(iii) The moment generating function is given by $M_J(t)=e^{tj}$, $t\in\RR$. Thus, $\epsilon_J=\infty$ and the inverse of $M_J$ is given by $M_J^{-1}(t)=\ln(t)/j, t>0$. Then, applying Corollary \ref{Aexplicit} we conclude that 
\begin{align*}
    c_s=\min\left\{\frac{\kappa}{2\eta j}\left(\ln\left(\frac{\beta}{\alpha}\right)+\frac{\alpha}{\beta}-1\right),\frac{\kappa^2}{2\sigma^2}\right\}.
\end{align*}
\end{proof}


\printbibliography

\end{document}